\titleformat*{\section}{\large\bfseries}
\titleformat*{\subsection}{\bfseries}
\titleformat*{\subsubsection}{\it}
\newcommand{\fRelax}[4]{f^{#1}_{#2}(#3,#4)}
\newcommand{\E}[1]{\mathbb{E}[#1]}
\renewcommand{\P}[1]{\mathbb{P}(#1)}
\newcommand{\R}{\mathbb{R}}
\newcommand{\sumi}{\sum\limits_{i=1}^n}
\newcommand{\bom}{\boldsymbol{\omega}}
\newcommand{\bgam}{\boldsymbol{\gamma}}
\newcommand{\bs}[1]{{\mathbf{#1}}}
\newcommand{\x}{\mathbf{x}}
\newcommand{\ovX}{\overline{X}}
\newcommand{\ovOmega}{\overline{\Omega}}
\newcommand{\ovW}{\overline{W}}
\newcommand{\ovWi}{\overline{W}_i}
\newcommand{\commentout}[1]{}
\newcounter{Ex_Count}
\newtheorem{theorem}{Theorem}[section]
\newtheorem{lemma}[theorem]{Lemma}
\newtheorem{corollary}[theorem]{Corollary}
\newtheorem{assumption}[theorem]{Assumption}
\theoremstyle{definition}
\newtheorem{definition}[theorem]{Definition}
\newtheorem{exm}[Ex_Count]{Example}
\theoremstyle{remark}
\newtheorem{remark}[theorem]{Remark}
\title{Convex Relaxations for Global Optimization Under Uncertainty Described by Continuous Random Variables}
\author{Yuanxun Shao and Joseph K. Scott\footnotemark}
\begin{document}

    \maketitle
    \begin{abstract}
    This article considers nonconvex global optimization problems subject to uncertainties described by continuous random variables. Such problems arise in chemical process design, renewable energy systems, stochastic model predictive control, etc. Here, we restrict our attention to problems with expected-value objectives and no recourse decisions. In principle, such problems can be solved globally using spatial branch-and-bound (B\&B). However, B\&B requires the ability to bound the optimal objective value on subintervals of the search space, and existing techniques are not generally applicable because expected-value objectives often cannot be written in closed-form. To address this, this article presents a new method for computing convex and concave relaxations of nonconvex expected-value functions, which can be used to obtain rigorous bounds for use in B\&B. Furthermore, these relaxations obey a second-order pointwise convergence property, which is sufficient for finite termination of B\&B under standard assumptions. Empirical results are shown for three simple examples.
\end{abstract}

\setcounter{footnote}{1}
\renewcommand*{\thefootnote}{\fnsymbol{footnote}}
\footnotetext{Department of Chemical and Biomolecular Engineering, Clemson University, Clemson, SC (\url{jks9@clemson.edu}) } 

\section{Introduction}
\label{Introduction}
This article presents a new method for automatically constructing convex underestimators and concave overestimators (i.e., convex and concave relaxations) of functions of the form $F(\x)\equiv \E{f(\x,\bom)}$, where $\mathbb{E}$ denotes the expected value over continuous random variables $\bom\in\overline{\Omega}\subset\R^{n_{\omega}}$ and $f$ may be nonconvex in both arguments. The ability to construct convex and concave relaxations of nonconvex functions is central to algorithms for solving nonlinear optimization problems to guaranteed global optimality \cite{Horst:GOtext1,Sahinidis:ConvRel,McCormick:1976,Adjiman1998}. Here, we are specifically motivated by the following global optimization problem under uncertainty:
\begin{alignat}{1}
\label{Eq:IntroOpt}
\min_{\bs{x}\in C} \ \E{f(\x,\bom)}
\end{alignat}
Such problems arise broadly in chemical process design \cite{Behrooz2017}, structural design \cite{LIEM2017}, renewable energy systems \cite{Hakizimana:2016,Papadopoulos:2013}, portfolio optimization \cite{Pang2015}, stochastic model predictive control \cite{FARINA2015,Mesbah2016}, discrete event systems \cite{Ermoliev:1997}, etc. Moreover, although we restrict our attention to single-stage problems in this article (i.e., problems with no recourse decisions),  more flexible two-stage and multistage formulations can also be reduced to \eqref{Eq:IntroOpt} through the use of parameterized decision rules, which is an increasingly popular method for obtaining tractable approximate solutions \cite{Georghiou2015}.

In the applications above, many uncertainties are best modeled by continuous random variables, including process yields, material properties, renewable power generation, product demands, returns on investments, etc.~\cite{LIEM2017,Hakizimana:2016,Papadopoulos:2013,Pang2015}. However, when $f$ is nonlinear, this very often precludes writing the function $F(\x)\equiv\E{f(\x,\bom)}$ analytically in closed form. Moreover, expressing $F(\x)$ via quadrature rules quickly becomes intractable as the dimension of $\bom$ increases. In this situation, $F(\x)$ must be evaluated by sampling, which fundamentally limits the applicability of guaranteed global optimization algorithms such as spatial Branch-and-Bound (B\&B). Specifically, to perform an exhaustive global search, B\&B requires the ability to compute guaranteed lower and upper bounds on the optimal value of $F$ on any given subinterval $X$ of its domain. Assuming for simplicity that the set $C$ in \eqref{Eq:IntroOpt} is convex, these bounds are typically computed by minimizing a convex relaxation of $F$ over $X\cap C$ (for the lower bound) and evaluating $F$ at a feasible point in $X\cap C$ (for the upper bound). However, conventional methods for constructing convex relaxations are only applicable to functions that are known explicitly in closed form \cite{Horst:GOtext1,Sahinidis:ConvRel,McCormick:1976,Adjiman1998}. Moreover, using only sample-based approximations of $F$, it is not even possible to bound its value at a single feasible point with finitely many computations.

At present, the most common approach for solving \eqref{Eq:IntroOpt} globally is sample-average approximation (SAA), which approximates $F$ using fixed samples of the random variables chosen \emph{prior} to optimization. This results in a deterministic optimization problem that can be solved globally using conventional methods \cite{Horst:GOtext1,Sahinidis:ConvRel,McCormick:1976,Adjiman1998}. However, SAA has several critical limitations that often lead to inaccurate solutions or excessive computational cost for nonconvex problems. Most notably, it only guarantees convergence to a global solution (with probability one) as the sample size tends to infinity \cite{Shapiro2008}. Moreover, the number of samples required to achieve a high-quality solution in practice is unknown and can be quite large \cite{Verweij2003} (also see `ill-conditioned' problems in \cite{Linderoth2006}). Perhaps more importantly, a sufficient sample size is typically not known \emph{a priori}. Theoretical bounds are available \cite{Shapiro2006}, but are not generally computable and are often excessively large \cite{Shapiro2008}. Instead, state-of-the-art SAA methods solve multiple deterministic approximations with independent samples to compute statistical bounds on the approximation error \cite{Linderoth2006}. However, these are only asymptotically valid. Moreover, this typically involves solving 10--40 independent instances to global optimality, and the entire procedure must be repeated from scratch if a larger sample size is deemed necessary \cite{Linderoth2006}. This is clearly problematic for nonconvex problems, where solving just one instance to global optimality is already computationally demanding.

An alternative approach is the stochastic branch-and-bound algorithm \cite{Norkin1998}, which applies spatial B\&B to \eqref{Eq:IntroOpt} using probabilistic upper and lower bounds on each node $X$ based on a finite number of samples. Relative to SAA, a strength of this approach is that the sample size can be dynamically adapted as the search proceeds. However, since the computed bounds are only statistically valid, there is a nonzero probability of fathoming optimal solutions. Thus, stochastic B\&B only ensures convergence to a global solution when no fathoming is done, and even then only in the limit of infinite branching.

In the case where $f$ is convex, a number of deterministic (i.e., sample-free) methods are available for computing rigorous underestimators and overestimators of $F(\x)=\E{f(\x,\bom)}$ that can be used for globally solving \eqref{Eq:IntroOpt}. The simplest underestimator is given by Jensen's inequality \cite{Perlman:1974}, which states that $\E{f(\x,\bom)} \geq f(\x,\E{\bom})$ for convex $f$. This leads to a deterministic lower bounding problem for \eqref{Eq:IntroOpt} that can be solved using standard methods \cite{Birge:1986}. Moreover, $F(\x)$ can be bounded above at any feasible point $\x$ using, e.g., the Edmunson-Madansky upper bound \cite{Madansky:1959}, which uses values of $f(\x,\bom)$ at the extreme points of the uncertainty set $\overline{\Omega}$. Notably, these upper and lower bounds can be made arbitrarily tight using successively refined partitions of $\overline{\Omega}$, which allows \eqref{Eq:IntroOpt} to be solved to guaranteed $\epsilon$-global optimality \cite{Birge:1986,Frauendorfer:2011}. Moreover, refinements of the Jensen and Edmunson-Madansky bounds using higher order moments of $\bom$ have been studied in \cite{Dokov:2002,Edirisinghe1996}. However, all of these methods require $f$ to be convex or, more generally, to satisfy a convex-concave saddle property. Moreover, achieving convergence by partitioning $\overline{\Omega}$ requires the ability to efficiently compute probabilities and conditional expectations on subintervals of $\overline{\Omega}$, which severely limits the distributions of $\bom$ that can be handled efficiently (e.g., to multivariate uniform or Gaussian distributions with independent elements).

To address these limitations, this article presents a new method for computing deterministic convex and concave relaxations of $F(\x)=\E{f(\x,\bom)}$ on subintervals $X$ of its domain. This method applies to arbitrary nonconvex functions $f$ and a very general class of multivariate probability density functions for $\bom$. In brief, nonconvexity is addressed through a novel combination of Jensen's inequality and existing relaxation techniques for deterministic functions, such as McCormick's technique \cite{Scott:GenMCRel}. Similarly, general multivariate densities are addressed by relaxing nonconvex transformations that relate $\bom$ to random variables with simpler distributions. We show that the resulting relaxations of $F$ can be improved by partitioning $\overline{\Omega}$. More importantly, we establish second-order pointwise convergence of the relaxations to $F$ as the diameter of $X$ tends to $0$, provided that the partition of $\overline{\Omega}$ is appropriately refined.

The proposed relaxations can be used to compute both upper and lower bounds for \eqref{Eq:IntroOpt} restricted to any given $X$. In principle, this enables the global solution of \eqref{Eq:IntroOpt} by spatial B\&B. We leave the details of this algorithm for future work. However, we note here some potentially significant advantages of this approach relative to existing methods. First, unlike the stochastic B\&B algorithm, our relaxations provide deterministic upper and lower bounds for \eqref{Eq:IntroOpt} on any $X$, so there is zero probability of incorrectly fathoming an optimal solution during B\&B. Second, the convergence of our relaxations as the diameter of $X$ tends to $0$ implies that spatial B\&B will terminate finitely with an $\epsilon$-global solution under standard assumptions \cite{Horst:GOtext1,Bompadre:2012}. This is in contrast to both stochastic B\&B and SAA, which only converge in the limit of infinite sampling. In fact, the second-order convergence rate established here is known to be critical for avoiding the so-called \emph{cluster effect} in B\&B, and hence avoiding exponential run-time in practice \cite{Kannan2017}. Third, although refining the partition of $\overline{\Omega}$ is required for convergence, valid relaxations on $X$ can be obtained using any partition of $\overline{\Omega}$, no matter how coarse. Thus, the partition of $\overline{\Omega}$ can be adaptively refined as the B\&B search proceeds. This is potentially a very significant advantage over SAA, which must determine an appropriate number of samples \emph{a priori}, and must solve the problem again from scratch if more samples are deemed necessary.

The remainder of this article is organized as follows. Section \ref{Preliminaries} establishes the necessary definitions and notation, followed by a formal problem statement in Section \ref{Problem Statement}. Next, Sections \ref{Relaxations for Expected-Value Functions} and \ref{Convergence} present the main theoretical results establishing the validity and convergence of the proposed relaxations, respectively. Section \ref{Non-Uniform RVs} develops an extension of the basic relaxation technique that enables efficient computations with a wide variety of multivariate probability density functions. Finally Section \ref{Conclusions} concludes the article.

\section{Preliminaries}
\label{Preliminaries}
Convex and concave relaxations are defined as follows.
\begin{definition}
\label{Def:Relaxation of functions}
Let $S\subset \R^n$ be convex and $h:S\rightarrow \R$. Functions $h^{cv},h^{cc}:S\rightarrow \R$ are \emph{convex and concave relaxations of $h$ on $S$}, respectively, if $h^{cv}$ is convex on $S$, $h^{cc}$ is concave on $S$, and 
\begin{alignat*}{1}
h^{cv}(\bs{s})\leq h(\bs{s})\leq h^{cc}(\bs{s}), \quad \forall \bs{s}\in S.
\end{alignat*}
\end{definition}

The following extension of Definition \ref{Def:Relaxation of functions} is useful for convergence analysis. First, for any $\bs{s}^L,\bs{s}^U\in\mathbb{R}^{n}$ with $\bs{s}^L\leq\bs{s}^U$, let $S=[\bs{s}^L,\bs{s}^U]$ denote the compact $n$-dimensional interval $\{\bs{s}\in\mathbb{R}^{n}:\bs{s}^L\leq\bs{s}\leq\bs{s}^U\}$. Moreover, for $\overline{S}\subset \mathbb{R}^{n}$, let $\mathbb{I}\overline{S}$ denote the set of all compact interval subsets $S$ of $\overline{S}$. In particular, let $\mathbb{IR}^n$ denote the set of all compact interval subsets of $\mathbb{R}^n$. Finally, denote the \emph{width} of $S$ by $w(S)=\max\limits_i\vert s_i^U-s_i^L\vert$.

\begin{definition}
\label{Def:scheme of relaxations}
Let $\overline{S}\subset \R^n$ and $h:\overline{S}\rightarrow \R$. A collection of functions $h_{S}^{cv},h_{S}^{cc}: \overline{S}\rightarrow \R$ indexed by $S\in\mathbb{I}\overline{S}$ is called a \emph{scheme of relaxations for $h$ in $\overline{S}$} if, for every $S\in\mathbb{I}\overline{S}$, $h_{S}^{cv}$ and $h_{S}^{cc}$ are convex and concave relaxations of $h$ on $S$.
\end{definition}

\begin{remark}
The notion of a scheme of relaxations was first introduced in \cite{Bompadre:2012} with the alternative name \emph{scheme of estimators}. Here, we use the term \emph{relaxations} in place of \emph{estimators} to avoid possible confusion with the common meaning of \emph{estimation} in the stochastic setting.
\end{remark}

The following notion of convergence for schemes of relaxations originates in \cite{Bompadre:2012}.

\begin{definition}
\label{pointwise convergence}
Let $\overline{S}\subset \R^n$ and $h:\overline{S}\rightarrow \R$. A scheme of relaxations $(h^{cv}_{S},h^{cc}_{S})_{{S}\in\mathbb{I}\overline{S}}$ for $h$ in $\overline{S}$ has \emph{pointwise convergence of order $\gamma$} if $\exists\tau\in\mathbb{R}_+$ such that
\begin{alignat*}{1}
\sup\limits_{\bs{s}\in {S}}\vert h^{cc}_{S}(\bs{s})-h^{cv}_{S}(\bs{s})\vert\leq \tau w(S)^{\gamma}, \quad \forall S\in\mathbb{I}\overline{S}.
\end{alignat*}
\end{definition}

Note that the constants $\gamma$ and $\tau$ in Definition \ref{pointwise convergence} may depend on $\overline{S}$, but not on $S$. First-order convergence is necessary for finite termination of spatial-B\&B algorithms, while second-order convergence is known to be critical for efficient B\&B because it can eliminate the \emph{cluster effect}, which refers to the accumulation of a large number of B\&B nodes near a global solution \cite{Horst:GOtext1,Bompadre:2012,Kannan2017}.

Several methods are available for automatically computing schemes of relaxations for \emph{factorable functions}. A function $h$ is called \emph{factorable} if it is a finite recursive composition of basic operations including $\{+,-,\times,\div\}$ and standard univariate functions such as $x^{n}$, $e^x$, $\sin{x}$, etc. Roughly, factorable functions include every function that can be written explicitly in computer code. Schemes of relaxations for factorable functions can be computed by the $\alpha$BB method \cite{Adjiman1998,Skjal2012}, McCormick's relaxation technique \cite{McCormick:1976,Scott:GenMCRel}, the addition of variables and constraints \cite{Sahinidis:ConvRel}, and several advanced techniques \cite{Meyer2005,Gounaris2008,Bao:2015,Tsoukalas2014,Khan2017}. Moreover, both $\alpha$BB and McCormick relaxations are known to to exhibit second-order pointwise convergence \cite{Bompadre:2012}. However, for functions $h$ that are not known in closed form, and hence are not factorable, none of the aforementioned techniques apply. Some recent extensions of $\alpha$BB and McCormick relaxations do address certain types of implicitly defined functions, such as the parametric solutions of fixed-point equations \cite{Wechsung:2015,Stuber:GOIF}, ordinary differential equations \cite{Fl:BBB,Scott:2GODERel}, and differential-algebraic equations \cite{Scott:NonlinDAERels}. However, no techniques are currently available for relaxing the expected-value function of interest here.

\section{Problem Statement}
\label{Problem Statement}
Let $\bom\in\mathbb{R}^{n_{\omega}}$ be a vector of continuous random variables (RVs) distributed according to a probability density function (PDF) $p:\mathbb{R}^{n_{\omega}}\rightarrow\mathbb{R}$. We assume throughout that $p$ is zero outside of a compact interval $\overline{\Omega}\subset \R^{n_{\omega}}$. Let $\overline{X}\subset \mathbb{R}^{n_x}$ be compact, let $f:\overline{X}\times \overline{\Omega}\rightarrow\mathbb{R}$ be a potentially nonconvex function, and assume that the expected value $\E{f(\x,\bom)}\equiv\int_{\ovOmega}f(\x,\bom)p(\bom)d\bom$ exists for all $\x\in \ovX$. Moreover, define $F:\ovX\rightarrow\mathbb{R}$ by $F(\x)\equiv\E{f(\x,\bom)}$, $\forall \x\in \ovX$.

The objective of this article is to present a new scheme of relaxations for $F$ in $\ovX$ with second-order pointwise convergence. In particular, this scheme addresses the general case where $F$ cannot be expressed explicitly as a factorable function of $\x$, and standard relaxation techniques cannot be applied. In such cases, $F$ is most often approximated via sampling and, in general, $F(\x)$ cannot even be evaluated exactly with finitely many computations. Critically, our new scheme consists of relaxations that provide bounds on $F$ itself, rather than a finite approximation of $F$, but are nonetheless finitely computable. Therefore, these relaxations can be used within a spatial-B\&B framework to solve \eqref{Eq:IntroOpt} to $\epsilon$-global optimality without approximation errors. 

A central assumption in the remainder of the article is that the integrand $f$ is a factorable function, or that a scheme of relaxations is available by some other means.
\begin{assumption}
\label{Assump:f factorable}
A scheme of relaxations for $f$ in $\ovX\times\ovOmega$, denoted by $(f_{X\times\Omega}^{cv},f_{X\times\Omega}^{cc})_{X\times\Omega\in\mathbb{I}\ovX\times\mathbb{I}\ovOmega}$, is available. $f_{X\times\Omega}^{cv}$ and $f_{X\times\Omega}^{cc}$ denote convex and concave relaxations of $f$ jointly on $X\times\Omega$.
\end{assumption}

\begin{remark}
We will sometimes make use of relaxations of $f$ with respect to either $\x$ or $\bom$ independently, with the other treated as a constant. We denote these naturally by $f^{cv}_{X}(\x,\bom)$ and $f^{cv}_{\Omega}(\x,\bom)$. Within the scheme of Assumption \ref{Assump:f factorable}, these relaxations are equivalent to the more cumbersome notations $f^{cv}_{X}(\x,\bom)\equiv f^{cv}_{X\times[\bom,\bom]}(\x,\bom)$ and $f^{cv}_{\Omega}(\x,\bom)\equiv f^{cv}_{[\x,\x]\times\Omega}(\x,\bom)$.
\end{remark}

\section{Relaxing Expected-Value Functions}
\label{Relaxations for Expected-Value Functions}
This section presents a general approach for constructing finitely computable convex and concave relaxations of the expected value function $F(\x)=\E{f(\x,\bom)}$. Let $(f_{X\times\Omega}^{cv},f_{X\times\Omega}^{cc})$ be a scheme of relaxations for $f$ on $\ovX\times \ovOmega$ as per Assumption \ref{Assump:f factorable} and choose any $X\in\mathbb{I}\ovX$. To begin, note that a direct application of integral monotonicity gives (\cite{Karr:1993}, p.101)
\begin{alignat}{1}
\label{Eq:Integral monotonicity}
\E{\fRelax{cv}{X}{\x}{\bom}}\leq \E{f(\x,\bom)} \leq \E{\fRelax{cc}{X}{\x}{\bom}},\quad \forall \x\in X,
\end{alignat}
which suggests defining relaxations of $F$ by $F^{cv}(\x)\equiv\E{\fRelax{cv}{X}{\x}{\bom}}$ and $F^{cc}(\x)\equiv\E{\fRelax{cc}{X}{\x}{\bom}}$. However, although these functions are indeed convex and concave on $X$, respectively, they are not finitely computable because they need to be evaluated by sampling in general. To overcome this limitation, we apply Jensen's inequality, which is stated as follows.

\begin{lemma}
\label{Jensen's Inequality}
Let $\Omega\subset\overline{\Omega}$ be convex and let $g:\Omega\rightarrow \R$. If $g$ is convex and $\E{g(\bom)}$ exists, then $\E{g(\bom)}\geq g(\E{\bom})$. If $g$ is concave, then $\E{g(\bom)}\leq g(\E{\bom})$.
\end{lemma}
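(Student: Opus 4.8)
The plan is to prove the convex case via a supporting-hyperplane (subgradient) argument and then obtain the concave case by applying that result to $-g$. First I would establish that $\E{\bom}\in\Omega$. Since $\bom$ takes values in $\Omega$ and $\Omega$ is convex and compact, its mean $\E{\bom}$ is a weighted average of points of $\Omega$ and therefore lies in $\Omega$; this guarantees that $g(\E{\bom})$ is well-defined and that $g$ is convex in a neighborhood of this point within $\Omega$.

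Next I would invoke the existence of a supporting affine minorant for the convex function $g$ at the point $\E{\bom}$. Because $g$ is convex on the convex set $\Omega$, there exists a subgradient $\bs{a}\in\R^{n_{\omega}}$ such that
\begin{alignat*}{1}
g(\bom)\geq g(\E{\bom})+\bs{a}^{\top}(\bom-\E{\bom}),\quad\forall\bom\in\Omega.
\end{alignat*}
I would then take the expectation of both sides. Integral monotonicity preserves the inequality, and by linearity of the expectation the affine term vanishes, since $\E{\bs{a}^{\top}(\bom-\E{\bom})}=\bs{a}^{\top}(\E{\bom}-\E{\bom})=0$. This yields $\E{g(\bom)}\geq g(\E{\bom})$, as claimed. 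The concave statement follows immediately: if $g$ is concave then $-g$ is convex, so the first part gives $\E{-g(\bom)}\geq -g(\E{\bom})$, and negating reverses the inequality to produce $\E{g(\bom)}\leq g(\E{\bom})$.

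The main obstacle I anticipate is the rigorous justification of the supporting affine minorant at $\E{\bom}$. Subgradients are guaranteed to exist at every point in the relative interior of the domain of a finite convex function, but additional care is needed if $\E{\bom}$ happens to lie on the boundary of $\Omega$. In the settings of interest $\Omega$ is a compact interval, so $\E{\bom}$ generically lies in the interior; the boundary case can be handled either by a limiting argument or by appealing to the one-sided directional derivatives of $g$, which remain finite for a convex function that is finite on all of $\Omega$ and thus still furnish a valid supporting inequality. A secondary technical point is confirming $\E{\bom}\in\Omega$, which rests on the convexity and closedness of $\Omega$ together with the fact that $\bom$ is supported on $\Omega$.
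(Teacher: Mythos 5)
The paper offers no argument of its own here---the proof is a citation to Proposition 1.1 of Perlman---so your subgradient proof is a self-contained alternative, and its main line is the standard one and correct: locate $\E{\bom}$ in $\Omega$, take a subgradient $\bs{a}$ of $g$ at $\E{\bom}$, apply integral monotonicity and linearity of expectation so the affine term vanishes, and get the concave case by negating $g$. Whenever $\E{\bom}$ lies in the relative interior of $\Omega$ this is airtight, and you correctly identified the boundary case as the one point needing care.

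However, your proposed repair of that boundary case is wrong as stated. It is not true that the one-sided directional derivatives of a convex function finite on all of $\Omega$ remain finite at relative-boundary points: take $\Omega=[-1,1]$ and $g(\omega)=-\sqrt{1-\omega^2}$, which is convex and finite on $\Omega$, yet at $\omega=1$ the inward directional derivative is $-\infty$ and no finite subgradient (hence no supporting affine minorant) exists, since the required slope $a$ would have to satisfy $a\geq\sqrt{2/\epsilon}$ as $\epsilon=1-\omega\rightarrow 0$. A naive limiting argument from interior points fails for the same reason---the subgradients blow up. The correct repair is a face reduction: if $\E{\bom}$ lies on the relative boundary of $\Omega$, choose a supporting hyperplane with $\bs{a}^{\top}\bom\geq\bs{a}^{\top}\E{\bom}$ on $\Omega$; taking expectations shows $\E{\bs{a}^{\top}\bom-\bs{a}^{\top}\E{\bom}}=0$ with a nonnegative integrand, so $\bs{a}^{\top}\bom=\bs{a}^{\top}\E{\bom}$ almost surely and the distribution is concentrated on the lower-dimensional face $\Omega\cap\{\bom:\bs{a}^{\top}\bom=\bs{a}^{\top}\E{\bom}\}$; now induct on dimension, the zero-dimensional case being the degenerate one $\bom=\E{\bom}$ a.s., where Jensen holds with equality. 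The same induction is also what rigorously establishes $\E{\bom}\in\Omega$: your ``weighted average'' gloss is a finite-support argument, and the lemma assumes $\Omega$ convex but not closed, so compactness cannot be invoked (in the paper's actual application the $\Omega_i$ are compact intervals, where $\E{\bom}\in\Omega_i$ is immediate coordinatewise, but a mean lying on a face of the interval still requires the reduction above). Perlman's Proposition 1.1, which the paper cites in lieu of a proof, is a version of Jensen's inequality that covers precisely these degenerate cases.
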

\begin{proof}
See Proposition 1.1 in \cite{Perlman:1974}.
\end{proof}

Although Jensen's inequality is widely used to relax stochastic programs, it has so far only been applied in the case where $f(\x,\cdot)$ is convex on $\ovOmega$ for all $\x\in X$, which we do not assume here. Instead, we propose to combine existing convex relaxation techniques such as McCormick relaxations with Jensen's inequality. To do this, it is necessary to relax $f$ \emph{jointly} on $X\times \ovOmega$. Then, integral monotonicity and Jensen's inequality imply that
\begin{alignat}{1}
\label{Eq:Integral monotonicity plus Jensens cv}
\E{f(\x,\bom)} \geq \E{\fRelax{cv}{X\times\ovOmega}{\x}{\bom}} \geq \fRelax{cv}{X\times\ovOmega}{\x}{\E{\bom}},\\
\label{Eq:Integral monotonicity plus Jensens cc}
\E{f(\x,\bom)} \leq \E{\fRelax{cc}{X\times\ovOmega}{\x}{\bom}}\leq \fRelax{cc}{X\times\ovOmega}{\x}{\E{\bom}}.
\end{alignat}
for all $\x\in X$. Note that the integrals $\E{\fRelax{cv}{X\times\ovOmega}{\x}{\bom}}$ and $\E{\fRelax{cc}{X\times\ovOmega}{\x}{\bom}}$ exist because the convexity and concavity of the integrands implies that they are continuous on the interior of $X\times\ovOmega$, and the boundary of $X\times\ovOmega$ has measure zero because $X\times\ovOmega$ is an interval. The inequalities \eqref{Eq:Integral monotonicity plus Jensens cv}--\eqref{Eq:Integral monotonicity plus Jensens cc} suggest defining relaxations for $F$ by $F^{cv}(\x)\equiv \fRelax{cv}{X\times\ovOmega}{\x}{\E{\bom}}$ and $F^{cc}(\x)\equiv\fRelax{cc}{X\times\ovOmega}{\x}{\E{\bom}}$. These relaxations are clearly convex and concave on $X$, respectively, and are finitely computable provided that $\E{\bom}$ is known. However, a remaining difficulty is that the under/over-estimation caused by the use of Jensen's inequality does not to converge to zero as $w(X)\rightarrow0$, which is required for finite termination of spatial-B\&B. We address this problem by considering relaxations constructed on interval partitions of $\ovOmega$.

\begin{definition}
A collection $\mathcal{P}=\{ \Omega_i \}_{i=1}^{n}$ of compact intervals $\Omega_i\in\mathbb{I}\ovOmega$ is called an \emph{interval partition of $\ovOmega$} if $\ovOmega=\cup_{i=1}^n\Omega_i$ and $\mathrm{int}(\Omega_i)\cap\mathrm{int}(\Omega_j)=\emptyset$ for all distinct $i$ and $j$.
\end{definition}

\begin{definition}
For any measurable $\Omega\subset\ovOmega$, let $\mathbb{P}(\Omega)$ denote the probability of the event $\bom\in\Omega$, and let $\mathbb{E}[\cdot|\Omega]$ denote the conditional expected-value conditioned on the event $\bom\in\Omega$.
\end{definition}

The following theorem extends the relaxations defined above to partitions of $\ovOmega$.
\begin{theorem}
\label{relaxations of RVs}
Let $\mathcal{P}=\{ \Omega_i \}_{i=1}^{n}$ be an interval partition of $\ovOmega$. For every $X\in\mathbb{I}\ovX$ and every $\x\in X$, define
\begin{alignat}{1}
\label{Eq:FcvXP definition}
F^{cv}_{X \times \mathcal{P}}(\x) &\equiv  \sumi \P{\Omega_i}\fRelax{cv}{X\times \Omega_i}{\x}{\mathbb{E}[\bom| \Omega_i]}, \\
\label{Eq:FccXP definition}
F^{cc}_{X \times \mathcal{P}}(\x) &\equiv \sumi \P{\Omega_i}\fRelax{cc}{X\times \Omega_i}{\x}{\mathbb{E}[\bom| \Omega_i]}.
\end{alignat}
$F^{cv}_{X \times \mathcal{P}}$ and $F^{cc}_{X \times \mathcal{P}}$ are convex and concave relaxations of $F$ on $X$, respectively.
\begin{proof}
By the law of total expectation (Proposition 5.1 in \cite{Ross:2002}), 
\begin{alignat}{1}
\label{law of total expectation for F(x)}
F(\x) & =\E{f(\x,\bom)}=\sumi \P{\Omega_i}\mathbb{E}[f(\x,\bom)|\Omega_i],
\end{alignat}
for all $\x\in \ovX$. Thus, for any $X\in\mathbb{I}\ovX$ and any $\x\in X$, integral monotonicity and Jensen's inequality give,
\begin{alignat}{1}
F(\x) & \geq \sumi \P{\Omega_i}\mathbb{E}[\fRelax{cv}{X\times \Omega_i}{\x}{\bom} | \Omega_i], \\
&\geq \sumi \P{\Omega_i}\fRelax{cv}{X\times \Omega_i}{\x}{\mathbb{E}[\bom | \Omega_i]}, \\
&=F^{cv}_{X\times\mathcal{P}}(\x).
\end{alignat}
Moreover, $F^{cv}_{X\times\mathcal{P}}$ is convex on $X$ because it is a sum of convex functions. Thus, $F^{cv}_{X\times\mathcal{P}}$ is a convex relaxation of $F$ on $X$, and the proof for $F^{cc}_{X\times\mathcal{P}}$ is analogous. 
\end{proof}
\end{theorem}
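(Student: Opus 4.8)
The plan is to verify the two defining properties of a relaxation separately: the bounding inequalities $F^{cv}_{X\times\mathcal{P}}(\x)\leq F(\x)\leq F^{cc}_{X\times\mathcal{P}}(\x)$ on $X$, and then the convexity of $F^{cv}_{X\times\mathcal{P}}$ together with the concavity of $F^{cc}_{X\times\mathcal{P}}$. I would carry out the convex case in full detail and obtain the concave case by a symmetric argument with all inequalities reversed.

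For the lower bound, the natural starting point is to decompose the expectation defining $F$ over the partition cells using the law of total expectation, writing $F(\x)=\sumi\P{\Omega_i}\,\E{f(\x,\bom)|\Omega_i}$ for each $\x\in X$. Working cell by cell, Assumption \ref{Assump:f factorable} guarantees that $\fRelax{cv}{X\times\Omega_i}{\x}{\bom}$ underestimates $f$ pointwise on $X\times\Omega_i$, so conditional integral monotonicity replaces $f$ by its relaxation inside each conditional expectation without increasing the value. The crucial step is then to move the conditional expectation through the relaxation: since $\fRelax{cv}{X\times\Omega_i}{\x}{\cdot}$ is the restriction of a jointly convex function and is therefore convex on the convex set $\Omega_i$, Jensen's inequality (Lemma \ref{Jensen's Inequality}), applied to the conditional probability measure $\P{\cdot|\Omega_i}$, yields $\E{\fRelax{cv}{X\times\Omega_i}{\x}{\bom}|\Omega_i}\geq\fRelax{cv}{X\times\Omega_i}{\x}{\E{\bom|\Omega_i}}$. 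Summing these per-cell bounds against the nonnegative weights $\P{\Omega_i}$ gives $F(\x)\geq F^{cv}_{X\times\mathcal{P}}(\x)$ and completes the bounding half.

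For convexity I would argue termwise. For each fixed $i$ the conditional mean $\E{\bom|\Omega_i}$ is a constant, so $\x\mapsto\fRelax{cv}{X\times\Omega_i}{\x}{\E{\bom|\Omega_i}}$ is the restriction of the jointly convex function $\fRelax{cv}{X\times\Omega_i}{\cdot}{\cdot}$ to an affine slice of $X\times\Omega_i$, and is hence convex on $X$. Because each weight $\P{\Omega_i}$ is nonnegative, $F^{cv}_{X\times\mathcal{P}}$ is a nonnegative combination of convex functions and is thus convex on $X$, as required.

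I expect the step needing the most care to be the application of Jensen's inequality in the conditional setting. Two points must be checked. First, $\E{\bom|\Omega_i}$ must actually lie in $\Omega_i$ so that the relaxation is defined and convex there; this holds because $\Omega_i$ is a compact, hence convex, interval and the conditional mean lies in its convex hull. Second, the relevant conditional expectations must exist, which follows exactly as in the discussion preceding the theorem: the convex and concave integrands are continuous on the interior of $X\times\Omega_i$ and its boundary has measure zero. It is also worth confirming explicitly that Lemma \ref{Jensen's Inequality} is being invoked for the conditional measure $\P{\cdot|\Omega_i}$ rather than the unconditional one, which is legitimate since $\P{\cdot|\Omega_i}$ is itself a probability measure supported on the convex interval $\Omega_i$.
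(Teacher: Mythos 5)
Your proof is correct and follows essentially the same route as the paper's: the law of total expectation to decompose $F$ over the partition cells, integral monotonicity to replace $f$ by $\fRelax{cv}{X\times\Omega_i}{\x}{\bom}$ within each conditional expectation, Jensen's inequality per cell, and termwise convexity under the nonnegative weights $\P{\Omega_i}$. Your additional checks---that $\mathbb{E}[\bom|\Omega_i]\in\Omega_i$, that the conditional expectations exist, and that Jensen is applied to the conditional measure $\P{\cdot|\Omega_i}$---are details the paper leaves implicit, and including them only strengthens the argument.
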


The relaxations \eqref{Eq:FcvXP definition}--\eqref{Eq:FccXP definition} are finitely computable provided that the probabilities $\mathbb{P}(\Omega_i)$ and conditional expectations $\mathbb{E}[\bom|\Omega_i]$ are computable for any subinterval $\Omega_i\subset\ovOmega$. This is trivial when $\bom$ is uniformly distributed, but requires difficult multidimensional integrations even for Gaussian random variables. We develop a general approach for avoiding such integrals for a broad class of RVs called \emph{factorable RVs} in \S\ref{Non-Uniform RVs}. Given these probabilities and expected values, the required relaxations of the integrand $f$ can be computed using any standard technique. In the following example we apply McCormick relaxations \cite{McCormick:1976,Scott:GenMCRel}. In this case, we call the relaxations \eqref{Eq:FcvXP definition}--\eqref{Eq:FccXP definition} \emph{Jensen-McCormick (JMC) relaxations}.

\begin{exm}
\label{Ex: Simple JMC}
Let $\ovX\equiv[-1,1]\times[-1,1]$, let $\bom=(\omega_1,\omega_2)$ be uniformly distributed in $\ovOmega\equiv[0,1]\times[0,2]$, and define $F:\ovX\rightarrow\mathbb{R}$ by $F(\x)\equiv \E{f(\x,\bom)}$ with
\begin{alignat}{1}
f(\x,\bom)\equiv\frac{x_1x_2\ln(3+x_1\omega_1\omega_2)-(x_1^2-1)(x_2^2-1)\omega_2^2}{2+\omega_1x_1}. \nonumber
\end{alignat}
The nonlinearity of $f$ makes it difficult if not impossible to evaluate $F$ analytically. Nonetheless, a rigorous convex relaxation for $F$ can be constructed using Theorem \ref{relaxations of RVs}. Figure \ref{Fig MCJ Relaxations} shows the relaxation $F^{cv}_{\ovX \times \mathcal{P}}(\x)$ computed using three different partitions $\mathcal{P}$ of $\ovOmega$ with $1$, $16$, and $64$ uniform subintervals each. The required relaxations $f_{\ovX\times\Omega_i}^{cv}$ were automatically constructed on each $\ovX\times\Omega_i$ by McCormick's relaxation technique \cite{McCormick:1976,Scott:GenMCRel}. Figure \ref{Fig MCJ Relaxations} also shows simulated $f(\x,\bom)$ values and a sample-average approximation of $F$ using 100 samples. Clearly, the Jensen-McCormick relaxations are convex and underestimate the expected value $F(\x)$. Interestingly, however, they do not underestimate $f(\x,\bom)$ for every $\bom\in\ovOmega$. Figure \ref{Fig MCJ Relaxations} also shows that $F_{\ovX\times\mathcal{P}}^{cv}(\x)$ gets significantly tighter as the partition of $\ovOmega$ is refined from 1 subinterval to 16, while the additional improvement from 16 to 64 is small. This shows that sharp results are obtained with few subintervals in this case. Note that $F_{\ovX\times\mathcal{P}}^{cv}(\x)$ will not converge to $F(\x)$ under further $\ovOmega$ partitioning unless $\ovX$ is also partitioned, as in spatial B\&B. \qed

\begin{figure}
\centering
\includegraphics[width=0.50\textwidth]{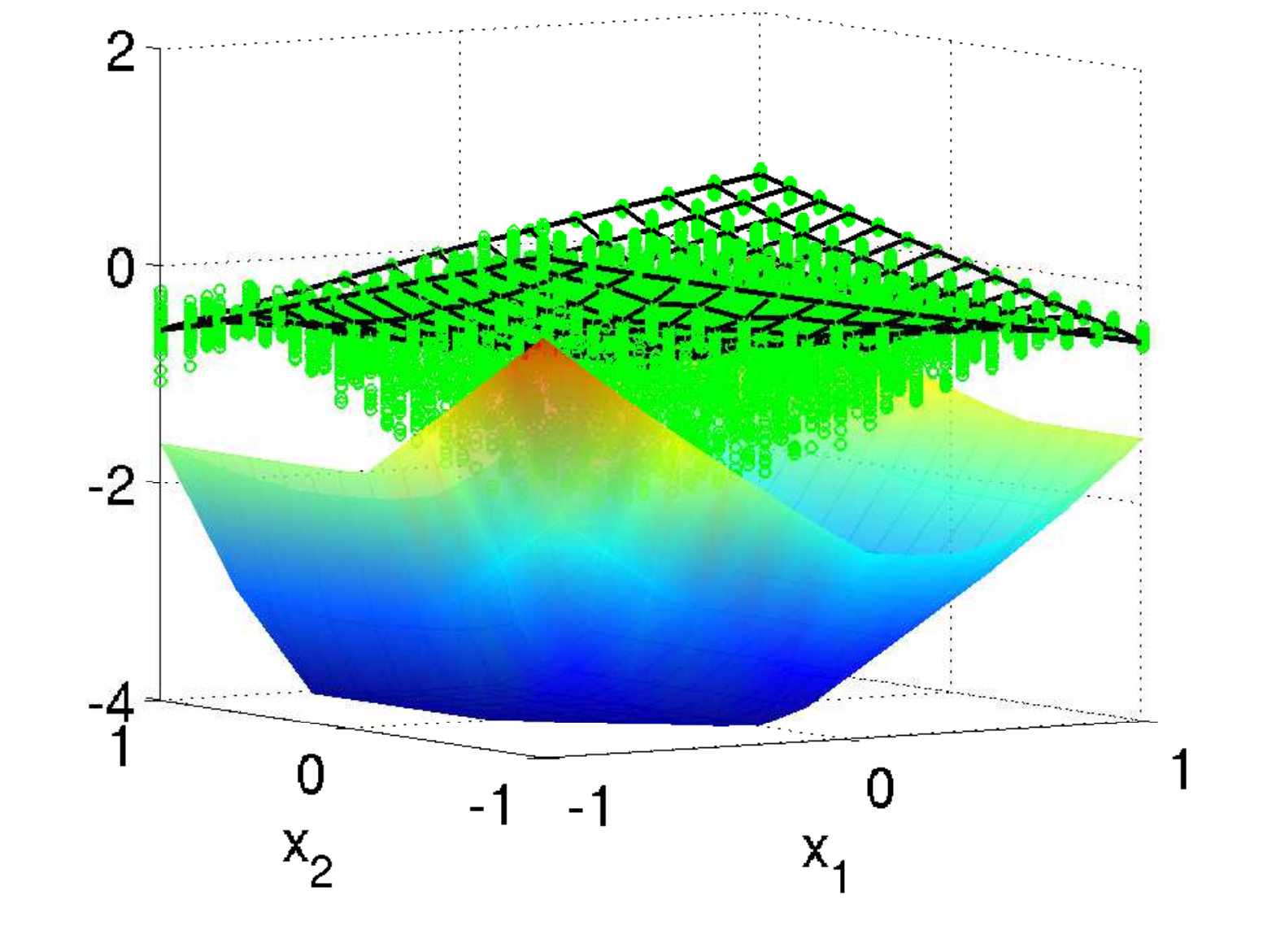}
\includegraphics[width=0.50\textwidth]{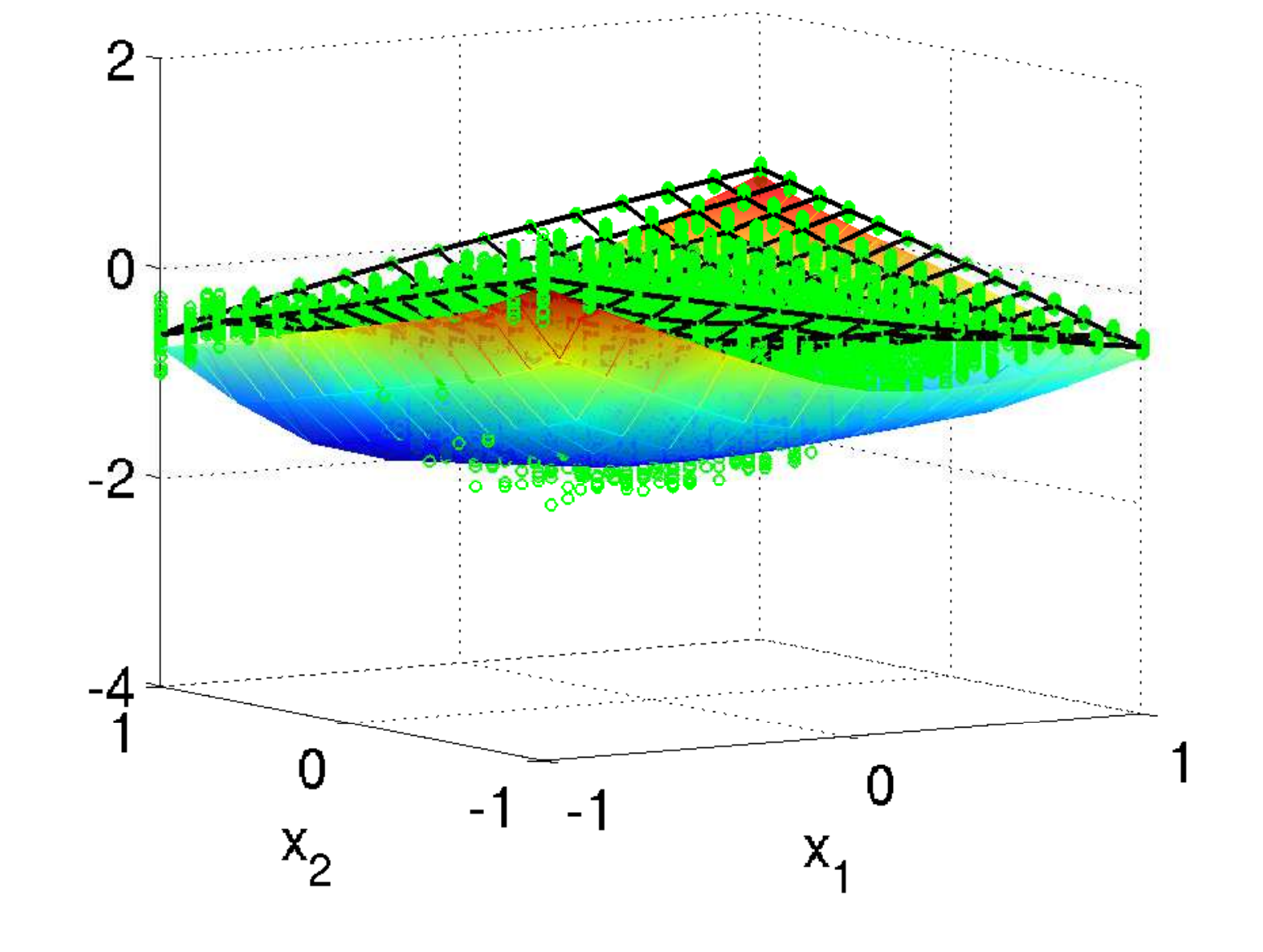}
\includegraphics[width=0.50\textwidth]{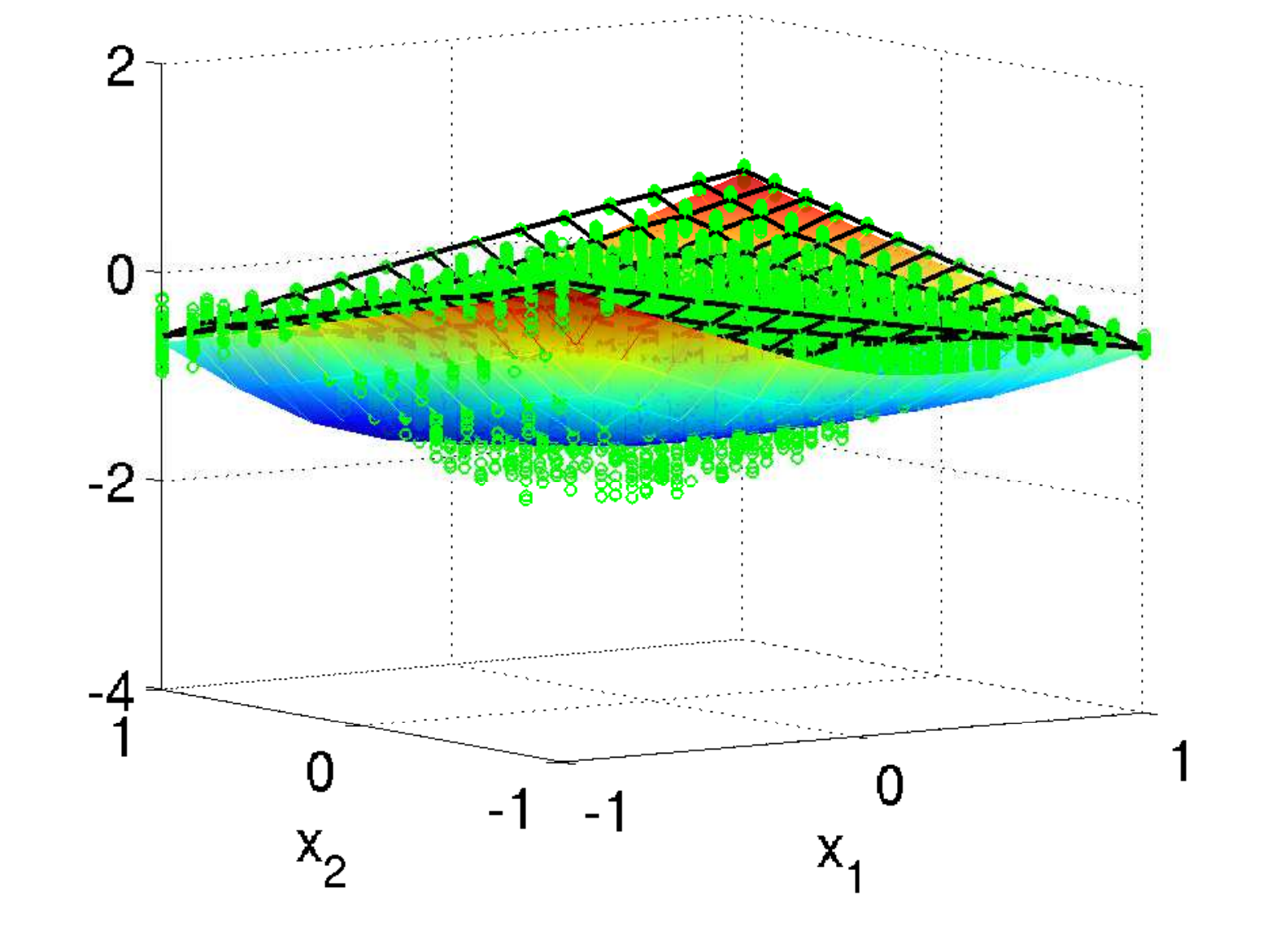}

\caption{Jensen-McCormick convex relaxations of $F$ in Example \ref{Ex: Simple JMC} (shaded surfaces) with partitions of $\ovOmega$ into 1 (top), 16 (middle), and 64 (bottom) uniform subintervals, along with simulated values of $f(\x,\bom)$ at sampled $\bom$ values ($\circ$) and a sample-average approximation of $F$ with 100 samples (black mesh).}
\label{Fig MCJ Relaxations}
\end{figure}
\end{exm}

Spatial B\&B algorithms compute a lower bound on the optimal objective value in a given subinterval $X$ by minimizing a convex relaxation of the objective function, while an upper bound is most often obtained by simply evaluating the objective at a feasible point. Theorem \ref{relaxations of RVs} provides a suitable convex relaxation for the lower bounding problem. However, in the presence of continuous RVs, computing a valid upper bound becomes nontrivial because, in general, $F$ cannot be evaluated finitely. One possible solution is to evaluate the concave relaxation $F^{cc}_{X\times\mathcal{P}}$ instead. However, this is unnecessarily conservative. Instead, rigorous upper (and lower) bounds can be computed at feasible points, without sampling error, by the following simple corollary of Theorem \ref{relaxations of RVs}.

\begin{corollary}
\label{Jensen+relaxation}
Let $\mathcal{P}=\{ \Omega_i \}_{i=1}^{n}$ be an interval partition of $\ovOmega$. For any $\x\in \ovX$, the following bounds hold:
\begin{alignat}{1}
F(\x)\geq \sumi \P{\Omega_i}\fRelax{cv}{\Omega_i}{\x}{\mathbb{E}[\bom |  \Omega_i]} ,\\
F(\x) \leq \sumi \P{\Omega_i}\fRelax{cc}{\Omega_i}{\x}{\mathbb{E}[\bom |  \Omega_i]}.
\end{alignat}
\end{corollary}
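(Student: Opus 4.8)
The plan is to derive both bounds as an immediate specialization of Theorem \ref{relaxations of RVs} to the degenerate interval $X=[\x,\x]$. First I would observe that the interval notation introduced before Definition \ref{Def:scheme of relaxations} permits degenerate intervals, since the defining condition $\bs{s}^L\leq\bs{s}^U$ allows equality; hence for any fixed $\x\in\ovX$ the singleton $[\x,\x]$ is a legitimate element of $\mathbb{I}\ovX$. Instantiating Theorem \ref{relaxations of RVs} with this choice of $X$ then gives that
\begin{alignat*}{1}
F^{cv}_{[\x,\x]\times\mathcal{P}}(\x)=\sumi\P{\Omega_i}\fRelax{cv}{[\x,\x]\times\Omega_i}{\x}{\mathbb{E}[\bom|\Omega_i]}
\end{alignat*}
is a convex relaxation of $F$ on $[\x,\x]$, and symmetrically $F^{cc}_{[\x,\x]\times\mathcal{P}}$ is a concave relaxation.

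Next I would unpack what ``convex relaxation on a singleton'' means. By Definition \ref{Def:Relaxation of functions}, the relaxation inequality $F^{cv}_{[\x,\x]\times\mathcal{P}}(\bs{s})\leq F(\bs{s})$ must hold for every $\bs{s}\in[\x,\x]$; since this set is the single point $\x$, the statement reduces to $F^{cv}_{[\x,\x]\times\mathcal{P}}(\x)\leq F(\x)$, while the convexity requirement becomes vacuous on a singleton. Finally, I would invoke the notational identity stated in the remark following Assumption \ref{Assump:f factorable}, namely $\frelax{cv}{\Omega_i}(\x,\mathbb{E}[\bom|\Omega_i])\equiv\fRelax{cv}{[\x,\x]\times\Omega_i}{\x}{\mathbb{E}[\bom|\Omega_i]}$ with $\x$ held fixed, to rewrite each summand and recover the first inequality in the stated form. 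The second (upper) inequality follows identically using $F^{cc}$.

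I do not anticipate a substantive obstacle, as this is a genuine corollary of Theorem \ref{relaxations of RVs}; the only point warranting care is confirming that the degenerate interval $[\x,\x]$ is admissible in all the relevant definitions, so that the theorem may legitimately be applied to it. It is, however, worth underscoring the conceptual content that makes the corollary useful: once $\x$ is fixed there is no longer any need for the relaxation to be convex in $\x$, so one is free to relax $f$ in $\bom$ alone on each $\Omega_i$ rather than jointly on $X\times\Omega_i$. Because the $\bom$-only relaxation $\frelax{cv}{\Omega_i}$ is no looser than the joint relaxation $\frelax{cv}{X\times\Omega_i}$ — and is typically strictly tighter — this corollary yields sharper pointwise bounds than simply evaluating $F^{cc}_{X\times\mathcal{P}}$, which is precisely why it is preferable for the upper-bounding step of B\&B.
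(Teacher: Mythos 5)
Your proof is correct and is exactly the paper's argument: the paper's entire proof is ``apply Theorem \ref{relaxations of RVs} with the degenerate interval $X=[\x,\x]$,'' and you have simply spelled out the admissibility of the singleton interval and the notational identity $\frelax{cv}{\Omega_i}(\x,\cdot)\equiv\frelax{cv}{[\x,\x]\times\Omega_i}(\x,\cdot)$ that the paper leaves implicit. One small caution on your closing remark: the claim that the $\bom$-only relaxation is never looser than the joint relaxation holds for inclusion-monotonic schemes such as McCormick's, but it is not guaranteed by Definition \ref{Def:scheme of relaxations} alone --- though this does not affect the validity of the corollary or your proof.
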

\begin{proof}
The result follows by simply applying Theorem \ref{relaxations of RVs} with the degenerate interval $X=[\x,\x]$.
\end{proof}

Clearly, the need to exhaustively partition $\ovOmega$ in Theorem \ref{relaxations of RVs} and Corollary \ref{JensTesten+relaxation} is potentially prohibitive for problems with high-dimensional uncertainty spaces. However, it is essential for obtaining deterministic bounds on $F$, rather than bounds that are only statistically valid. Moreover, Theorem \ref{relaxations of RVs} and Corollary \ref{Jensen+relaxation} provide valid bounds on $F$ for any choice of partition $\mathcal{P}$, no matter how coarse, and this has significant implications in the context of spatial B\&B. Specifically, since valid bounds are obtained with any $\mathcal{P}$, it is possible to fathom a given node $X$ with certainty using only a coarse partition of $\ovOmega$. In other words, if $X$ is proven to be infeasible or suboptimal based on such a coarse description of uncertainty, then this decision cannot be overturned at any later stage based on a more detailed representation (i.e., a finer partition). This is distinctly different from the case with sample-based bounds, where bounds based on few samples can always be invalidated by additional samples in the future. Thus, when using the bounds and relaxations of Theorem \ref{relaxations of RVs} and Corollary \ref{Jensen+relaxation} in B\&B, it is possible to refine the partition of $\overline{\Omega}$ adaptively as the search proceeds. It is therefore conceivable that, in some cases, much of the search space could be ruled out using only coarse partitions at low cost, while fine partitions are required only in the vicinity of global solutions. We leave the development and testing of such an adaptive B\&B scheme for future work. However, as a step in this direction, we now turn to the study of the convergence behavior of $F^{cv}_{X\times\mathcal{P}}$ and $F^{cc}_{X\times\mathcal{P}}$ as both $X$ and the elements of $\mathcal{P}$ are refined towards degeneracy.

\section{Convergence}
\label{Convergence}
Consider any $X\in\mathbb{I}\overline{X}$ and any interval partition $\mathcal{P}=\{\Omega_i\}_{i=1}^n$ of $\ovOmega$, and let the relaxations $F^{cv}_{X\times\mathcal{P}}$ and $F^{cc}_{X\times\mathcal{P}}$ be defined as in Theorem \ref{relaxations of RVs}. This section considers the convergence of these relaxations to $F(\x)=\mathbb{E}[f(\x,\bom)]$ as the width of $X$ and the elements $\Omega_i$ of $\mathcal{P}$ tend towards zero. We require the following assumption on the scheme of relaxations used for $f$.

\begin{assumption}
\label{Assump:f second-order pointwise convergence and C2}
The scheme of relaxations $(f_{X\times\Omega}^{cv},f_{X\times\Omega}^{cc})$ from Assumption \ref{Assump:f factorable} has second-order pointwise convergence in $\mathbb{I}\ovX\times\mathbb{I}\ovOmega$; i.e., $\exists\tau\in\mathbb{R}_+$ such that 
\begin{alignat*}{1}
\sup_{(\x,\bom)\in X\times\Omega}|f^{cc}_{X\times\Omega}(\x,\bom)-f^{cv}_{X\times\Omega}(\x,\bom)|\leq\tau w(X\times\Omega)^2,
\end{alignat*}
for all $(X\times\Omega)\in\mathbb{I}\ovX\times\mathbb{I}\ovOmega$.
\end{assumption}

Both McCormick and $\alpha$BB relaxations are known to 
satisfy Assumption \ref{Assump:f second-order pointwise convergence and C2} \cite{Bompadre:2012}. We now show that this implies a convergence bound for $F^{cv}_{X\times\mathcal{P}}$ and $F^{cc}_{X\times\mathcal{P}}$ in terms of $w(X)^2$ and the `average' square-width of partition elements $\Omega_i\in\mathcal{P}$.

\begin{lemma}
\label{Lem: Average Square Width Bound}
If Assumption \ref{Assump:f second-order pointwise convergence and C2} holds with $\tau\in\mathbb{R}_+$, then, for any $X\in\mathbb{I}\ovX$ and any interval partition $\mathcal{P}=\{\Omega_i\}_{i=1}^n$ of $\ovOmega$,
\begin{alignat*}{1}
\sup_{\x\in X}\big|F^{cc}_{X\times\mathcal{P}}(\x)- F^{cv}_{X\times\mathcal{P}}(\x)\big| \leq \tau\left[w(X)^{2}+\sumi \P{\Omega_i}w(\Omega_i)^2\right].
\end{alignat*}
\end{lemma}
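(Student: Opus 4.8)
The plan is to bound the gap between the two relaxations term-by-term, apply the second-order convergence of the underlying scheme for $f$ on each partition element, and then split the resulting joint width into a contribution from $X$ and a contribution from each $\Omega_i$.

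First I would subtract \eqref{Eq:FcvXP definition} from \eqref{Eq:FccXP definition} to write
$$F^{cc}_{X\times\mathcal{P}}(\x) - F^{cv}_{X\times\mathcal{P}}(\x) = \sumi \P{\Omega_i}\Big[\fRelax{cc}{X\times\Omega_i}{\x}{\mathbb{E}[\bom|\Omega_i]} - \fRelax{cv}{X\times\Omega_i}{\x}{\mathbb{E}[\bom|\Omega_i]}\Big].$$
Since every weight $\P{\Omega_i}$ is nonnegative, the triangle inequality followed by taking the supremum over $\x\in X$ gives
$$\sup_{\x\in X}\big|F^{cc}_{X\times\mathcal{P}}(\x)-F^{cv}_{X\times\mathcal{P}}(\x)\big| \leq \sumi \P{\Omega_i}\sup_{\x\in X}\big|\fRelax{cc}{X\times\Omega_i}{\x}{\mathbb{E}[\bom|\Omega_i]} - \fRelax{cv}{X\times\Omega_i}{\x}{\mathbb{E}[\bom|\Omega_i]}\big|,$$
so it suffices to bound each summand.

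To bound a summand I would invoke Assumption \ref{Assump:f second-order pointwise convergence and C2}. This requires that the evaluation point $(\x,\mathbb{E}[\bom|\Omega_i])$ actually lie in $X\times\Omega_i$, which holds because $\Omega_i$ is a compact interval, hence convex, so the conditional mean $\mathbb{E}[\bom|\Omega_i]$ belongs to $\Omega_i$. Consequently each $\x$-supremum is dominated by the full joint supremum over $X\times\Omega_i$, and Assumption \ref{Assump:f second-order pointwise convergence and C2} bounds it by $\tau\, w(X\times\Omega_i)^2$.

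The crucial step is then to separate the joint width. Because $w(\cdot)$ is the largest component-wise edge length of an interval, $w(X\times\Omega_i)=\max\{w(X),w(\Omega_i)\}$, whence $w(X\times\Omega_i)^2=\max\{w(X)^2,w(\Omega_i)^2\}\leq w(X)^2+w(\Omega_i)^2$. Substituting this and distributing the sum yields $\tau\, w(X)^2\sumi\P{\Omega_i}+\tau\sumi\P{\Omega_i}w(\Omega_i)^2$; since $\mathcal{P}$ partitions $\ovOmega$ and $p$ vanishes outside $\ovOmega$, the probabilities sum to one, $\sumi\P{\Omega_i}=1$, which collapses the first term to $\tau\, w(X)^2$ and gives the claimed bound. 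I expect the only genuinely delicate point to be this width separation, which is precisely what converts the joint second-order rate of $f$ into the additive $w(X)^2$-plus-average-square-width form; the remaining manipulations are routine consequences of the triangle inequality, nonnegativity of the weights, and normalization of the probabilities.
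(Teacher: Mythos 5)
Your proof is correct and follows essentially the same route as the paper's: decompose the gap termwise, pass the supremum inside the probability-weighted sum, invoke Assumption \ref{Assump:f second-order pointwise convergence and C2} on each $X\times\Omega_i$, bound $w(X\times\Omega_i)^2\leq w(X)^2+w(\Omega_i)^2$, and use $\sumi\P{\Omega_i}=1$. The only differences are that you make explicit two details the paper leaves implicit---that $\mathbb{E}[\bom|\Omega_i]\in\Omega_i$ by convexity of the interval, and that $w(X\times\Omega_i)=\max\{w(X),w(\Omega_i)\}$ under the paper's definition of width---which is welcome but does not change the argument.
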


\begin{proof}
Choose any $X\in\mathbb{I}\ovX$ and any $\x\in X$. Moreover, choose any interval partition $\mathcal{P}=\{\Omega_i\}_{i=1}^n$ of $\ovOmega$ and define the shorthand $\hat{\bom}_i\equiv\mathbb{E}[\bom|\Omega_i]$. From \eqref{Eq:FcvXP definition}--\eqref{Eq:FccXP definition}, we have
\begin{alignat}{1}
&\sup_{\x\in X}\big| F^{cc}_{X\times\mathcal{P}}(\x)-F^{cv}_{X\times\mathcal{P}}(\x) \big| \\
&=\sup_{\x\in X}\sumi \P{\Omega_i}\big|\fRelax{cc}{X\times \Omega_i}{\x}{\hat{\bom}_i} -\fRelax{cv}{X\times \Omega_i}{\x}{\hat{\bom}_i} \big|, \\
&\leq\sumi \P{\Omega_i}\sup_{\x\in X}\big|\fRelax{cc}{X\times \Omega_i}{\x}{\hat{\bom}_i} -\fRelax{cv}{X\times \Omega_i}{\x}{\hat{\bom}_i} \big|, \\
&\leq\sumi \P{\Omega_i}\tau w(X\times\Omega_i)^2, \\
&\leq\sumi \P{\Omega_i}\tau \left(w(X)^2+w(\Omega_i)^2\right), \\
&\leq\tau \left(w(X)^2+\sumi \P{\Omega_i}w(\Omega_i)^2\right).
\end{alignat}
\end{proof}

In order to use the relaxations $F^{cv}_{X\times\mathcal{P}}$ and $F^{cc}_{X\times\mathcal{P}}$ in a spatial B\&B algorithm, it is important that the relaxation error $\sup_{\x\in X}\big|F^{cc}_{X\times\mathcal{P}}(\x)- F^{cv}_{X\times\mathcal{P}}(\x)\big|$ converges to zero as $w(X)\rightarrow 0$ \cite{Horst:GOtext1}. However, Lemma \ref{Lem: Average Square Width Bound} suggests that this will not occur if $\mathcal{P}$ remains constant, since the term $\sum_{i=1}^n \P{\Omega_i}w(\Omega_i)^2$ will not converge to zero. However, convergence as $w(X)\rightarrow0$ can be achieved if $\mathcal{P}$ is refined appropriately as $X$ diminishes. We formalize this next.

\begin{definition}
\label{Def:Partition Map}
For every $X\in\mathbb{I}\ovX$, let $\Phi(X)=\{\Omega_i\}_{i=1}^n$ denote an interval partition of $\ovOmega$ satisfying the condition
\begin{alignat}{1}
\label{Eq:Phi Convergence Condition}
\sumi \P{\Omega_i}w(\Omega_i)^2 \leq Kw(X)^2,
\end{alignat}
for some constant $K\in\mathbb{R}_+$ that is independent of $X$. Moreover, let $(\mathcal{F}^{cv}_X,\mathcal{F}_X^{cc})_{X\in\mathbb{I}\ovX}$ be a scheme of relaxations for $F$ defined for all $X\in\mathbb{I}\ovX$ by
\begin{alignat}{1}
\label{Eq:mathcalF Def cv}
\mathcal{F}^{cv}_X (\x) &\equiv F^{cv}_{X\times\Phi(X)}(\x), \quad \forall \x\in X, \\
\label{Eq:mathcalF Def cc}
\mathcal{F}^{cc}_X (\x) &\equiv F^{cc}_{X\times\Phi(X)}(\x), \quad \forall \x\in X,
\end{alignat}
where $F^{cv}_{X\times\Phi(X)}$ and $F^{cc}_{X\times\Phi(X)}$ are defined as in \eqref{Eq:FcvXP definition}--\eqref{Eq:FccXP definition}.
\end{definition} 

\begin{theorem}
\label{Thm: Final Convergence Result}
The scheme of relaxations $(\mathcal{F}^{cv}_X,\mathcal{F}_X^{cc})_{X\in\mathbb{I}\ovX}$ for $F$ has second-order pointwise convergence; i.e., there exists $\hat{\tau}\in\mathbb{R}_+$ such that
\begin{alignat*}{1}
\sup_{\x\in X}\big|\mathcal{F}^{cc}_{X}(\x)-\mathcal{F}^{cv}_{X}(\x)\big| \leq \hat{\tau} w(X)^{2}, \quad \forall X\in\mathbb{I}\ovX.
\end{alignat*}
\end{theorem}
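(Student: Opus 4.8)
The plan is to combine the per-partition error bound already established in Lemma \ref{Lem: Average Square Width Bound} with the defining property of the partition map $\Phi$ from Definition \ref{Def:Partition Map}. Once these two ingredients are in hand the result is essentially immediate, so the proof reduces to a careful substitution.

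First I would fix an arbitrary $X\in\mathbb{I}\ovX$ and recall that, by \eqref{Eq:mathcalF Def cv}--\eqref{Eq:mathcalF Def cc}, the relaxations $\mathcal{F}^{cv}_X$ and $\mathcal{F}^{cc}_X$ coincide with $F^{cv}_{X\times\Phi(X)}$ and $F^{cc}_{X\times\Phi(X)}$ for the specific partition $\mathcal{P}=\Phi(X)=\{\Omega_i\}_{i=1}^n$. Since Assumption \ref{Assump:f second-order pointwise convergence and C2} holds with some $\tau\in\mathbb{R}_+$, Lemma \ref{Lem: Average Square Width Bound} applies verbatim to this particular partition and yields
\[
\sup_{\x\in X}\big|\mathcal{F}^{cc}_X(\x)-\mathcal{F}^{cv}_X(\x)\big| \leq \tau\Big[w(X)^2+\sumi\P{\Omega_i}w(\Omega_i)^2\Big].
\]
Next I would invoke the structural property that $\Phi(X)$ is required to satisfy, namely inequality \eqref{Eq:Phi Convergence Condition}, $\sumi\P{\Omega_i}w(\Omega_i)^2\leq Kw(X)^2$ with $K$ independent of $X$. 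Substituting this into the bound above gives
\[
\sup_{\x\in X}\big|\mathcal{F}^{cc}_X(\x)-\mathcal{F}^{cv}_X(\x)\big| \leq \tau\big(w(X)^2+Kw(X)^2\big)=\tau(1+K)w(X)^2,
\]
and the claim follows upon setting $\hat{\tau}=\tau(1+K)$.

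Given these two ingredients, the argument contains no genuine obstacle; all of the substantive work resides in Lemma \ref{Lem: Average Square Width Bound} (which already absorbs the joint second-order convergence of the integrand relaxations together with the subadditivity $w(X\times\Omega_i)^2\leq w(X)^2+w(\Omega_i)^2$) and in the construction encoded by Definition \ref{Def:Partition Map}. The one point I would be careful to verify is that $\hat{\tau}$ is genuinely uniform over $\mathbb{I}\ovX$: this holds precisely because $\tau$ is furnished by Assumption \ref{Assump:f second-order pointwise convergence and C2} and $K$ is stipulated to be $X$-independent in \eqref{Eq:Phi Convergence Condition}, so their product inherits this independence.

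If I wanted to be thorough, I would also remark that Definition \ref{Def:Partition Map} is not vacuous, i.e.\ that a map $\Phi$ satisfying \eqref{Eq:Phi Convergence Condition} always exists: partitioning $\ovOmega$ into subintervals of width at most $w(X)$ forces $w(\Omega_i)\leq w(X)$ for every $i$, whence $\sumi\P{\Omega_i}w(\Omega_i)^2\leq w(X)^2\sumi\P{\Omega_i}=w(X)^2$, so that \eqref{Eq:Phi Convergence Condition} holds with $K=1$. This confirms that the hypotheses of the theorem are always attainable and that the conclusion is never empty.
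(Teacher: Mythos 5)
Your proposal is correct and follows essentially the same argument as the paper's proof: apply Lemma \ref{Lem: Average Square Width Bound} to the partition $\Phi(X)$, substitute the bound \eqref{Eq:Phi Convergence Condition}, and set $\hat{\tau}=\tau(1+K)$. Your additional remarks on the uniformity of $\hat{\tau}$ and the non-vacuousness of Definition \ref{Def:Partition Map} are sound and consistent with the paper's own discussion of uniform partitioning following the theorem.
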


\begin{proof}
Let $\tau\in\mathbb{R}_+$ satisfy Assumption \ref{Assump:f second-order pointwise convergence and C2} and choose any $X\in\mathbb{I}\ovX$. By Definition \ref{Def:Partition Map} and Lemma \ref{Lem: Average Square Width Bound},
\begin{alignat*}{1}
\sup_{\x\in X}\big|\mathcal{F}^{cc}_{X}(\x)-\mathcal{F}^{cv}_{X}(\x)\big| & = \sup_{\x\in X}\big|F^{cc}_{X\times\Phi(X)}(\x)- F^{cv}_{X\times\Phi(X)}(\x)\big| \\
& \leq \tau\left[w(X)^{2}+\sumi \P{\Omega_i}w(\Omega_i)^2\right],
\end{alignat*}
where $\Phi(X)=\{\Omega_i\}_{i=1}^n$. Applying \eqref{Eq:Phi Convergence Condition},
\begin{alignat}{2}
\sup_{\x\in X}\big|\mathcal{F}^{cc}_{X}(\x)-\mathcal{F}^{cv}_{X}(\x)\big| & \leq \tau(1+K)w(X)^{2},
\end{alignat}
which proves the result with $\hat{\tau}=\tau(1+K)$.
\end{proof} 

The partitioning condition \eqref{Eq:Phi Convergence Condition} is easily satisfied in practice. For example, choosing any $K\in\mathbb{R}_+$, it is satisfied by simply partitioning $\ovOmega$ uniformly until each element satisfies $w(\Omega_i)^2\leq Kw(X)^2$. The following example demonstrates the convergence result of Theorem \ref{Thm: Final Convergence Result} using this simple scheme. Although this scheme is likely to generate much larger partitions than are necessary for convergence, we leave the issue of efficient adaptive partitioning schemes for future work.

\begin{exm}
\label{Ex:Simple Convergence}
Let $\ovX\equiv[24,26]$, let $\omega$ be uniformly distributed in $\ovOmega\equiv[10,13]$, and define $F:\ovX\rightarrow\mathbb{R}$ by $F(x)\equiv \E{f(x,\bom)}$ with
\begin{alignat}{1}
f(x,\bom)\equiv\frac{(\bom-10)^2\ln(x)+(x-5)^2}{\bom}. \nonumber
\end{alignat}
Consider the sequence of intervals $X_{\epsilon}\equiv [25-\epsilon,25+\epsilon] \subset \ovX$ with $\epsilon\rightarrow 0$, so that $w(X_{\epsilon})\rightarrow0$. For every $X_{\epsilon}$, let $\Phi(X_{\epsilon})=\{\Omega_i\}_{i=1}^{n}$ be generated by uniformly partitioning $\ovOmega$ until $w(\Omega_i)\leq 10w(X_{\epsilon})=20\epsilon$, which verifies \eqref{Eq:Phi Convergence Condition} with $K=100$. Moreover, define the relaxations $\mathcal{F}^{cv}_{X_{\epsilon}}$ and $\mathcal{F}^{cc}_{X_{\epsilon}}$ of $F$ on $X_{\epsilon}$ as in Definition \ref{Def:Partition Map}, where McCormick's relaxations are used to compute $(f^{cv}_{X\times\Omega},f^{cc}_{X\times\Omega})$ satisfying Assumptions \ref{Assump:f factorable} and \ref{Assump:f second-order pointwise convergence and C2}. Figure \ref{Fig Convergence X and Omega} shows the pointwise relaxation error $|\mathcal{F}^{cc}_{X_{\epsilon}}(x)-\mathcal{F}^{cv}_{X_{\epsilon}}(x)|$ versus $\epsilon$ for the point $x=25$. The observed slope of 2 on log-log axes indicates that the convergence is indeed second-order. \qed

\begin{figure}[h]
\centering
  \includegraphics[width=0.8\textwidth]{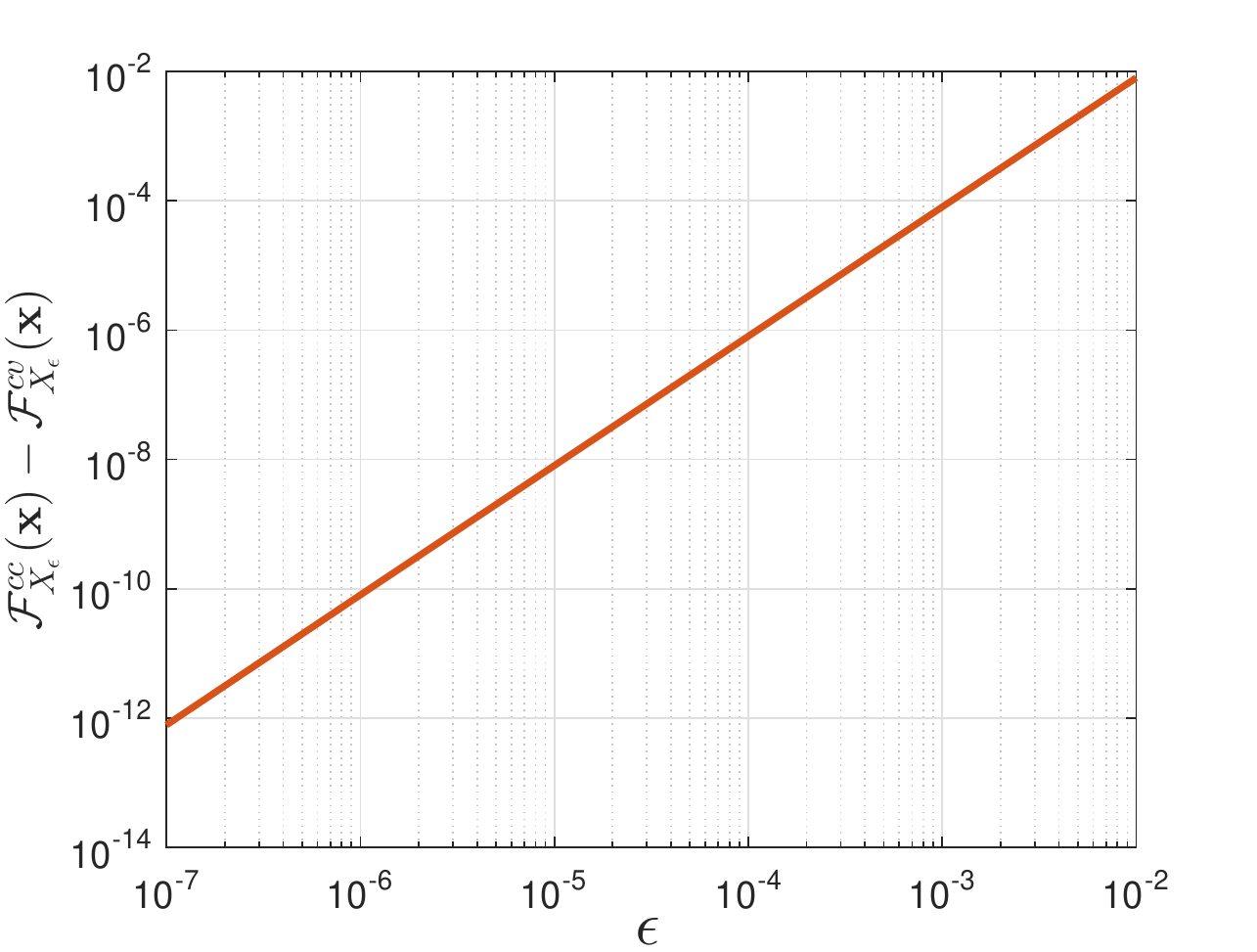}
\caption{Second-order pointwise convergence of Jensen-McCormick relaxations with respect to $\frac{1}{2}w(X_{\epsilon})=\epsilon$ for Example \ref{Ex:Simple Convergence} under a uniform $\ovOmega$ partitioning rule satisfying \eqref{Eq:Phi Convergence Condition} with $K=100$. Plotted values are for $x=\mathrm{mid}(X_{\epsilon})=25$.}
\label{Fig Convergence X and Omega}       
\end{figure}
\end{exm}

\section{Non-Uniform Random Variables}
\label{Non-Uniform RVs}
The relaxation theory presented in the previous sections puts two significant restrictions on the random variables (RVs) $\bom$. First, $\bom$ must be compactly supported in the interval $\ovOmega\in\mathbb{IR}^{n_{\omega}}$. Therefore, if we wish to use, e.g., normal random variables, we must use a truncated density function. However, this is not a major limitation since the truncated distribution can be made arbitrarily close to the original by choosing $\ovOmega$ large. Second, computing the relaxations $F^{cv}_{X\times\mathcal{P}}$ and $F^{cc}_{X\times\mathcal{P}}$ for a given partition $\mathcal{P}$ of $\ovOmega$ requires the ability to compute the probability $\mathbb{P}(\Omega)$ and conditional expectation $\mathbb{E}[\bom|\Omega]$ for any given subinterval $\Omega$ of $\ovOmega$. While this is trivial for uniform RVs, it may involve very cumbersome multidimensional integrals for other types of RVs. In this section, we address this issue in two steps. First, in \S\ref{Sec:Primitive RVs}, we compile a library of so-called \emph{primitive} RVs for which the required probabilities and expectations are easily computed by well-known formulas. Second, in \S\ref{Sec:Factorable RVs}, we extend our relaxation theory to problems with \emph{factorable} RVs, which are those RVs that can be transformed into primitive RVs by a factorable function. We then provide some general strategies for finding such transformations.

\subsection{Primitive RVs}
\label{Sec:Primitive RVs}
Recall from \S\ref{Problem Statement} that $\bom$ is a vector of random variables with PDF $p:\mathbb{R}^{n_{\omega}}\rightarrow\mathbb{R}$ compactly supported in the interval $\ovOmega\in\mathbb{IR}^{n_{\omega}}$. We will call $\bom$ a \emph{primitive} random vector if, for any $\Omega\in\mathbb{I}\ovOmega$, the quantities $\mathbb{P}(\Omega)$ and $\mathbb{E}[\bom|\Omega]$ can be computed efficiently through simple formulas. Clearly, uniformly distributed RVs are primitive. Moreover, if the elements of $\bom=(\bom_1,\ldots,\bom_{n_{\omega}})$ are independent, then several other distributions for each $\bom_i$ result in a primitive $\bom$. To see this, let $\ovOmega = \ovW_1\times\cdots\times\ovW_{n_{\omega}}$ and let $\Omega=W_1\times\cdots\times W_{n_{\omega}}\in\mathbb{I}\ovOmega$. With this notation, independence implies that 
\begin{alignat}{1}
\mathbb{P}(\Omega) &= \Pi_{i=1}^{n_{\omega}}\mathbb{P}(W_i), \\
\mathbb{E}[\bom|\Omega] &= \left(\mathbb{E}[\omega_1|W_1],\ldots,\mathbb{E}[\omega_{n_{\omega}}|W_{n_{\omega}}]\right).
\end{alignat}
Thus, $\bom$ is primitive if formulas are known for the one-dimensional probabilities $\mathbb{P}(W_i)$ and expectations $\mathbb{E}[\omega_i|W_i]$. Such formulas are well known for many common distributions. However, recall that distributions with non-interval supports must be truncated to $\ovW_i$ here. The following lemma provides a means to translate formulas for $\mathbb{P}(W_i)$ and $\mathbb{E}[\omega_i|W_i]$ for an arbitrary one-dimensional distribution to corresponding formulas for the same distribution truncated on $\ovWi$.

\begin{lemma}
\label{Lem:Truncation Lemma}
Let $\eta$ be a random variable with probability density function (PDF) $p_{\eta}:\mathbb{R}\rightarrow\mathbb{R}$, and let $\mathbb{P}_{\eta}$ and $\mathbb{E}_{\eta}$ denote the probability and expected value with respect to $p_{\eta}$, respectively. Moreover, choose any $\ovW\in\mathbb{IR}$ and let $\omega$ be a random variable with the truncated PDF $p:\mathbb{R}\rightarrow\mathbb{R}$ defined by
\begin{alignat*}{1}
p(\omega) \equiv p_{\eta}(\omega|\ovW) = \left\{ \begin{array}{lc}
p_{\eta}(\omega)/\mathbb{P}_{\eta}(\ovW) & \text{if }\omega\in \ovW \\
0 & \text{otherwise}
\end{array} \right..
\end{alignat*}
Let $\mathbb{P}$ and $\mathbb{E}$ denote the probability and expected value with respect to $p$. For any $W\in\mathbb{I}\ovW$, the following relations hold:
\begin{alignat}{1}
\label{Eq:TruncatedProb}
\mathbb{P}(W) &= \mathbb{P}_{\eta}(W)/\mathbb{P}_{\eta}(\ovW), \\
\label{Eq:TruncatedCondExp}
\mathbb{E}[\omega|W] &= \mathbb{E}_{\eta}[\omega|W],
\end{alignat}
\end{lemma}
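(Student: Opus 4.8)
The plan is to verify both identities directly from the definitions of probability and conditional expectation as integrals against the density $p$, exploiting the fact that $W\in\mathbb{I}\ovW$ means $W\subseteq\ovW$, so that on all of $W$ the truncated density reduces to the simple rescaling $p(\omega)=p_\eta(\omega)/\mathbb{P}_\eta(\ovW)$. The conceptual content is just that truncation multiplies the original density by the constant $1/\mathbb{P}_\eta(\ovW)$ on its support, so probabilities inherit this constant while the ratio defining the conditional expectation cancels it.

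First I would establish \eqref{Eq:TruncatedProb}. Writing $\mathbb{P}(W)=\int_W p(\omega)\,d\omega$ and substituting the rescaled form, which is valid on all of $W$ precisely because $W\subseteq\ovW$, the constant factor $1/\mathbb{P}_\eta(\ovW)$ pulls outside the integral, leaving $\frac{1}{\mathbb{P}_\eta(\ovW)}\int_W p_\eta(\omega)\,d\omega=\mathbb{P}_\eta(W)/\mathbb{P}_\eta(\ovW)$, which is exactly the claim.

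Next I would treat the conditional expectation \eqref{Eq:TruncatedCondExp}. Expanding $\mathbb{E}[\omega|W]$ as the ratio $\big(\int_W \omega\,p(\omega)\,d\omega\big)/\mathbb{P}(W)$ and again inserting the rescaled density in the numerator, the factor $1/\mathbb{P}_\eta(\ovW)$ appears in both the numerator integral and, via the identity just proved, in the denominator $\mathbb{P}(W)$. These two factors cancel, and what remains is precisely $\big(\int_W \omega\,p_\eta(\omega)\,d\omega\big)/\mathbb{P}_\eta(W)=\mathbb{E}_\eta[\omega|W]$.

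There is no substantive obstacle here; the result is an elementary consequence of the definitions. The only point requiring genuine care is the observation $W\subseteq\ovW$, which is what permits using the single rescaled expression for $p$ uniformly across both integrals, rather than having to split $W$ into parts inside and outside the truncation set $\ovW$. Everything else is a one-line computation in each case.
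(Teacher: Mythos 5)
Your proposal is correct and follows essentially the same route as the paper's proof: both establish \eqref{Eq:TruncatedProb} by substituting $p(\omega)=p_{\eta}(\omega)/\mathbb{P}_{\eta}(\ovW)$ into $\int_W p(\omega)\,d\omega$ (valid since $W\subseteq\ovW$) and pulling out the constant, then prove \eqref{Eq:TruncatedCondExp} by expanding the conditional expectation as a ratio and invoking the first identity so the factor $1/\mathbb{P}_{\eta}(\ovW)$ cancels. Your explicit remark that $W\subseteq\ovW$ is what licenses using the rescaled form uniformly on $W$ is a point the paper leaves implicit, but the argument is otherwise the same.
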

\begin{proof}
For any $W\in\mathbb{I}\ovW$,
\begin{alignat}{1}
\mathbb{P}(W) &= \int_{W}p(\omega)d\omega =\int_{W}\frac{p_{\eta}(\omega)}{\mathbb{P}_{\eta}(\ovW)}d\omega = \frac{\mathbb{P}_{\eta}(W)}{\mathbb{P}_{\eta}(\ovW)},
\end{alignat}
which proves \eqref{Eq:TruncatedProb}. Similarly,
\begin{alignat}{1}
\mathbb{E}[\omega|W] &= \frac{1}{\mathbb{P}(W)}\int_{W}\omega p(\omega)d\omega, \\
&=\frac{1}{\mathbb{P}(W)}\int_{W} \omega\frac{p_{\eta}(\omega)}{\mathbb{P}_{\eta}(\ovW)}d\omega.
\end{alignat}
Applying \eqref{Eq:TruncatedProb}, 
\begin{alignat}{1}
\mathbb{E}[\omega|W] &= \frac{1}{\mathbb{P}_{\eta}(W)}\int_{W} \omega p_{\eta}(\omega)d\omega=\mathbb{E}_{\eta}[\bom|W].
\end{alignat}
\end{proof}

Table \ref{Table:PrimitiveDists} lists formulas for the cumulative probability distributions (CDFs) $P$ and conditional expectations $\mathbb{E}[\omega|W]$ for several common one-dimensional distributions. When necessary, distributions are truncated to an interval $\ovW=[\overline{\omega}^L,\overline{\omega}^U]$. The listed CDFs can be used to compute $\mathbb{P}(W)$ via $\mathbb{P}(W)=P(\omega^U)-P(\omega^L)$. The formulas in Table \ref{Table:PrimitiveDists} follow directly from Lemma \ref{Lem:Truncation Lemma} and standard formulas for probabilities and conditional expectations in \cite{Johnson:1994,Okasha:2014}.

\begin{table*}
\caption{Primitive one-dimensional distributions on $\ovW=[\overline{\omega}^L,\overline{\omega}^U]$ with formulas for the CDFs $P(\omega)$ and the conditional expectations $\mathbb{E}[\omega|W]$ with $\omega\in W=[\omega^L,\omega^U]\in\mathbb{I}\ovW$. Normal and Gamma distributions are truncated to $\ovW$ \cite{Okasha:2014}. Formulas for the Beta distribution are for $\ovW=[0,1]$, but arbitrary $\ovW$ can be achieved by linear transformation of the RV (see \S\ref{Sec:Factorable RVs}).}

\scalebox{0.8}{
\begin{tabular}{lccc}
\hline\noalign{\smallskip}
Distribution & Notation & $P(\omega)$ & $\mathbb{E}[\omega|W]$  \\
\noalign{\smallskip}\hline\noalign{\smallskip}

\shortstack[l]{Uniform \\  $U(\overline{\omega}^L,\overline{\omega}^U)$}  
& 
& $\dfrac{\omega-\overline{\omega}^L}{\overline{\omega}^U-\overline{\omega}^L}$
& $\dfrac{\omega^L+\omega^U}{2}$\\
\noalign{\smallskip}\hline\noalign{\smallskip}

\shortstack[l]{Truncated Normal\\ $\mathcal{N}(\mu,\sigma^2,\overline{\omega}^L,\overline{\omega}^U)$} 
& \shortstack[l]{$\phi (\xi)\equiv\frac{1}{\sqrt{2\pi}}\exp(-\frac{1}{2}\xi^2)$ \\ $\Phi(\xi)\equiv\frac{1}{2}(1+\mathrm{erf}(\xi/ \sqrt{2}))$} 
& $\dfrac{\Phi(\frac{\omega-\mu}{\sigma})-\Phi(\frac{\overline{\omega}^L-\mu}{\sigma})}{\Phi(\frac{\overline{\omega}^U-\mu}{\sigma})-\Phi(\frac{\overline{\omega}^L-\mu}{\sigma})}$ 
& $\mu + \dfrac{\phi(\frac{\omega^L-\mu}{\sigma})-\phi(\frac{\omega^U-\mu}{\sigma})}{\Phi(\frac{\omega^U-\mu}{\sigma})-\Phi(\frac{\omega^L-\mu}{\sigma})}\sigma$\\

\noalign{\smallskip}\hline\noalign{\smallskip}
\shortstack[l]{Truncated Gamma\\  $\Gamma(\alpha,\beta,\overline{\omega}^L,\overline{\omega}^U)$\\ $\alpha>0,\beta>0$}  
& $\Gamma(a,z)\equiv \int_{z}^{\infty}t^{a-1}e^{-t}dt$
& \shortstack[l]{$\dfrac{\Gamma(\alpha,\frac{\omega}{\beta})-\Gamma(\alpha,\frac{\overline{\omega}^L}{\beta})}{\Gamma(\alpha,\frac{\overline{\omega}^U}{\beta})-\Gamma(\alpha,\frac{\overline{\omega}^L}{\beta})} $}  
& \shortstack[l]{$\dfrac{\beta(\Gamma(\alpha+1,\frac{\omega^U}{\beta})-\Gamma(\alpha+1,\frac{\omega^L}{\beta}))}{\Gamma(\alpha,\frac{\omega^U}{\beta})-\Gamma(\alpha,\frac{\omega^L}{\beta})}$} \\
\noalign{\smallskip}\hline\noalign{\smallskip}

\shortstack[l]{Beta $(\overline{W}=[0,1])$ \\$B(\alpha,\beta)$\\ $\alpha>0,\beta>0$ }  
& $B(a,b,c)\equiv\int_0^c t^{a-1}(1-t)^{b-1}dt$
& $\dfrac{B(\alpha,\beta,\omega)}{B(\alpha,\beta,1)}$ 
& $\dfrac{B(1+\alpha,\beta,\omega^U)-B(1+\alpha,\beta,\omega^L)}{B(\alpha,\beta,\omega^U)-B(\alpha,\beta,\omega^L)}$\\
\noalign{\smallskip}\hline

\end{tabular}
\label{Table:PrimitiveDists}}
\end{table*}

\subsection{Factorable RVs}
\label{Sec:Factorable RVs}
We now extend our relaxation method to a flexible class of non-primitive RVs. For the following definition, recall the definition of a \emph{factorable function} discussed at the end of \S\ref{Preliminaries}.

\begin{definition}
\label{Def:FactorableRV}
The random vector $\bom$ is called \emph{factorable} if there exists an open set $\Gamma_0\subset\mathbb{R}^{n_{\omega}}$, an interval $\overline{\Gamma}\in\mathbb{I}\Gamma_0$, and a function $\psi:\Gamma_0\rightarrow \mathbb{R}^{n_{\omega}}$ such that (i) $\psi$ is continuously differentiable on $\Gamma_0$ and $\mathrm{det}(\frac{\partial\psi}{\partial\bgam}(\bgam))\neq 0$, $\forall\gamma\in\Gamma_0$; (ii) $\psi$ is one-to-one on $\Gamma_0$, and hence invertible on the image set $\psi(\Gamma_0)$; (iii) $\bom$ has zero probability density outside of $\psi(\overline{\Gamma})$; (iv) $\gamma=\psi^{-1}(\bom)$ is a primitive RV; and (v) $\psi$ can be expressed as a factorable function on $\overline{\Gamma}$.
\end{definition}

\begin{remark}
\label{Rem:FactorablRV}
A standard result in probability theory (\S6.7 in \cite{Ross:2002}) shows that, under the conditions of Definition \ref{Def:FactorableRV}, $\gamma=\psi^{-1}(\bom)$ obeys the PDF $p_{\gamma}:\mathbb{R}^{n_{\omega}}\rightarrow\mathbb{R}$ defined by 
\begin{alignat}{1}
\label{Eq:bgam PDF}
p_{\gamma}(\gamma)=\left\{\begin{array}{ll}
p(\psi(\gamma))\left|\det\left(\frac{\partial\psi}{\partial\bgam}(\gamma)\right)\right| & \text{if }\bgam\in\overline{\Gamma} \\
0 & \text{otherwise}.
\end{array}\right.,
\end{alignat}
where $p$ is the PDF of $\bom$. Thus, the fourth condition of Definition \ref{Def:FactorableRV} is equivalent to the statement that $p_{\gamma}$ is the PDF of a primitive RV.
\end{remark}

The following theorem shows that the expected value $F(\x)=\mathbb{E}[f(\x,\bom)]$ of a factorable function $f$ with respect to a factorable RV $\bom$ can be reformulated as an equivalent expected value over a primitive RV $\bgam$, and the reformulated integrand is again a factorable function.

\begin{theorem}
\label{Thm:EvalEquivalence}
Assume that $\bom$ is factorable and let $\psi$, $\overline{\Gamma}$, $\bgam$, and $p_{\gamma}$ be defined as in Definition \ref{Def:FactorableRV} and Remark \ref{Rem:FactorablRV}. Moreover, denote the expected value with respect to $p_{\bgam}$ by $\mathbb{E}_{\bgam}$. If $\hat{f}:\overline{X}\times\overline{\Gamma}\rightarrow\mathbb{R}$ is defined by $\hat{f}(\x,\bgam)=f(\x,\psi(\bgam))$, then $\hat{f}$ is a factorable function and
\begin{alignat}{1}
\mathbb{E}[f(\x,\bom)] = \mathbb{E}_{\bgam}[\hat{f}(\x,\bgam)].
\end{alignat}
\end{theorem}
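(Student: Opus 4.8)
The plan is to establish the two claims separately: first that $\hat{f}$ is factorable, and then that the change of variables $\bom=\psi(\bgam)$ converts the expectation over $\bom$ into the claimed expectation over $\bgam$. The factorability claim should be immediate. By Assumption~\ref{Assump:f factorable} (and the definition of factorable functions at the end of \S\ref{Preliminaries}), $f$ is factorable, and by condition (v) of Definition~\ref{Def:FactorableRV}, each component of $\psi$ is factorable on $\overline{\Gamma}$. Since the class of factorable functions is closed under finite recursive composition by definition, the composite map $\hat{f}(\x,\bgam)=f(\x,\psi(\bgam))$ is itself a finite recursive composition of basic operations and standard univariate functions, hence factorable on $\ovX\times\overline{\Gamma}$. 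I would state this in one or two sentences without belaboring it.

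The substantive part is the integral identity. First I would rewrite the expectation as an integral restricted to the effective support. By condition (iii), the PDF $p$ of $\bom$ vanishes outside $\psi(\overline{\Gamma})$, so
\begin{alignat}{1}
\E{f(\x,\bom)}=\int_{\ovOmega}f(\x,\bom)p(\bom)\,d\bom=\int_{\psi(\overline{\Gamma})}f(\x,\bom)p(\bom)\,d\bom. \nonumber
\end{alignat}
Next I would invoke the multivariate change of variables theorem for the diffeomorphism $\psi$. The hypotheses needed for this theorem are supplied directly by Definition~\ref{Def:FactorableRV}: condition (i) gives continuous differentiability on $\Gamma_0$ together with a nonvanishing Jacobian determinant, and condition (ii) gives injectivity, so $\psi$ restricts to a $C^1$-diffeomorphism from $\overline{\Gamma}$ onto $\psi(\overline{\Gamma})$. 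The change of variables formula then yields
\begin{alignat}{1}
\int_{\psi(\overline{\Gamma})}f(\x,\bom)p(\bom)\,d\bom=\int_{\overline{\Gamma}}f(\x,\psi(\bgam))\,p(\psi(\bgam))\left|\det\left(\tfrac{\partial\psi}{\partial\bgam}(\bgam)\right)\right|d\bgam. \nonumber
\end{alignat}

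Finally I would recognize the integrand on the right as $\hat{f}(\x,\bgam)$ times the density $p_{\gamma}$ from Remark~\ref{Rem:FactorablRV}. By definition $\hat{f}(\x,\bgam)=f(\x,\psi(\bgam))$, and by \eqref{Eq:bgam PDF} the product $p(\psi(\bgam))\,\lvert\det(\partial\psi/\partial\bgam)\rvert$ equals $p_{\gamma}(\bgam)$ for $\bgam\in\overline{\Gamma}$, so the right-hand integral is exactly $\int_{\overline{\Gamma}}\hat{f}(\x,\bgam)p_{\gamma}(\bgam)\,d\bgam=\E_{\bgam}[\hat{f}(\x,\bgam)]$, which proves the identity. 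The main point requiring care, rather than a genuine obstacle, is ensuring the change of variables theorem is applicable on the closed interval $\overline{\Gamma}$; since the theorem is cleanest on the open set $\Gamma_0$ where $\psi$ is a diffeomorphism and the boundary of the interval $\overline{\Gamma}$ has Lebesgue measure zero, one can either apply the theorem on $\mathrm{int}(\overline{\Gamma})$ and discard the null boundary, or simply cite the standard result from \S6.7 of \cite{Ross:2002} already referenced in Remark~\ref{Rem:FactorablRV}, which is precisely the density-transformation statement being used here.
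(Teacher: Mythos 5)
Your proposal is correct and follows essentially the same route as the paper's proof: factorability of $\hat{f}$ by closure under composition, restriction of the integral to $\psi(\overline{\Gamma})$ via condition (iii), the change of variables theorem for the diffeomorphism supplied by conditions (i)--(ii), and identification of the transformed integrand with $\hat{f}\,p_{\gamma}$ via \eqref{Eq:bgam PDF}. The only difference is bookkeeping: where you discard the null boundary of $\overline{\Gamma}$ informally, the paper additionally notes (citing Munkres) that the boundary of the \emph{image} $\psi(\overline{\Gamma})$ is also null because $C^1$ diffeomorphisms preserve measure-zero sets, and that $\psi^{-1}(\mathrm{int}(\psi(\overline{\Gamma})))=\mathrm{int}(\overline{\Gamma})$ --- details worth making explicit, but your sketch correctly identifies the issue and its resolution.
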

\begin{proof}
By condition (v) of Definition \ref{Def:FactorableRV}, $\hat{f}$ is a composition of factorable functions, and is therefore factorable \cite{McCormick:1976}. Now, by condition (iii) of Definition \ref{Def:FactorableRV}, $p(\bom)=0$ for all $\bom\notin\psi(\overline{\Gamma})$. Therefore,
\begin{alignat*}{1}
\mathbb{E}[f(\x,\bom)] &= \int_{\psi(\overline{\Gamma})}f(\x,\bom)p(\bom)d\bom.
\end{alignat*}
Conditions (i) and (ii) imply that $\psi$ is a diffeomorphism from $\Gamma_0$ to $\psi(\Gamma_0)$. Since the boundary of $\overline{\Gamma}$ has measure zero, it follows that the boundary of $\psi(\overline{\Gamma})$ has measure zero (Lemma 18.1 in \cite{Munkres:AnalysisOnManifolds}) and can be excluded from the integral above. Moreover, $\psi$ is a valid change-of-variables from $\bom$ in the open set $\mathrm{int}(\psi(\overline{\Gamma}))$ to $\bgam$ in the open set $\psi^{-1}(\mathrm{int}(\psi(\overline{\Gamma})))$ (p.147 in \cite{Munkres:AnalysisOnManifolds}). Furthermore, the latter set is equivalent to $\mathrm{int}(\overline{\Gamma})$ by Theorem 18.2 in \cite{Munkres:AnalysisOnManifolds}. Then, applying the Change of Variables Theorem 17.2 in \cite{Munkres:AnalysisOnManifolds} and noting that the boundary of $\overline{\Gamma}$ has measure zero,
\begin{alignat*}{1}
&\mathbb{E}[f(\x,\bom)] = \int_{\overline{\Gamma}}f(\x,\psi(\bgam))p(\psi(\bgam))\left|\det\left(\frac{\partial\psi}{\partial\bgam}(\bgam)\right)\right|d\bgam.
\end{alignat*}
Then, using \eqref{Eq:bgam PDF},
\begin{alignat*}{1}
\mathbb{E}[f(\x,\bom)] = \int_{\overline{\Gamma}}f(\x,\psi(\bgam))p_{\bgam}(\bgam)d\bgam = \mathbb{E}_{\bgam}[\hat{f}(\x,\bgam)].
\end{alignat*}
\end{proof}

When $\bom$ is factorable, Theorem \ref{Thm:EvalEquivalence} implies that valid relaxations for $F(\x)=\mathbb{E}[f(\x,\bom)]$ can be obtained by relaxing the equivalent representation $F(\x)=\mathbb{E}_{\gamma}[\hat{f}(\x,\bgam)]$. Relaxations of the latter expression can be readily computed as described in \S\ref{Relaxations for Expected-Value Functions} since $\hat{f}$ is factorable and $\bgam$ is a primitive RV. Specifically, letting $\mathcal{P}=\{\Gamma_i\}_{i=1}^{n}$ denote an interval partition of $\overline{\Gamma}$, valid relaxations are given by
\begin{alignat}{1}
\label{Eq:GammaRel cv}
F^{cv}_{X\times\mathcal{P}}(\x)\equiv\sum_{i=1}^{n}\mathbb{P}_{\gamma}(\Gamma_i)\hat{f}^{cv}_{X\times\Gamma_i}(\x,\mathbb{E}_{\bgam}[\bgam|\Gamma_i]), \\
\label{Eq:GammaRel cc}
F^{cc}_{X\times\mathcal{P}}(\x)\equiv\sum_{i=1}^{n}\mathbb{P}_{\gamma}(\Gamma_i)\hat{f}^{cc}_{X\times\Gamma_i}(\x,\mathbb{E}_{\bgam}[\bgam|\Gamma_i]).
\end{alignat}
Thus, relaxations can be computed without explicitly computing $\mathbb{P}(\Omega)$ and $\mathbb{E}[\bom|\Omega]$ for non-primitive distributions. 

In the following subsections, we discuss several methods for computing the factorable transformation $\psi$ required by Definition \ref{Def:FactorableRV}. We first consider one-dimensional transformations that can be used to handle RVs $\bom$ with more general distributions than in Table \ref{Table:PrimitiveDists}, but still restricted to having independent elements. Subsequently, we show how RVs with independent elements can be transformed into RVs with any desired covariance matrix.

\subsection{The Inverse Transform Method}
Let $P:\mathbb{R}\rightarrow[0,1]$ be the cumulative distribution function (CDF) for $\omega$. It is well known that the RV defined by $\gamma = P(\omega)$ is uniformly distributed on $[0,1]$ \cite{Ripley2006}. Thus, $\gamma$ is a primitive RV, and $\bom=P^{-1}(\gamma)$ is a factorable RV as per Definition \ref{Def:FactorableRV} under the following assumptions.

\begin{assumption}
\label{Assum:CDF}
Let $\omega$ be compactly supported in the interval $\ovW\in\mathbb{IR}$. Assume that $P$ is continuously differentiable on $\ovW$ and satisfies $\frac{dP}{d\omega}(\omega)=p(\omega)>0$ for all $\omega\in\ovW$. Moreover, assume that $P^{-1}$ is a factorable function.
\end{assumption}

Under Assumption \ref{Assum:CDF}, it is always possible to define an open set $W_0$ containing $\ovW$ and a continuously differentiable function $\psi^{-1}:W_0\rightarrow\mathbb{R}$ that agrees with $P$ on $\ovW$ and satisfies $\frac{d\psi^{-1}}{d\omega}(\omega)>0$ for all $\omega\in W_0$. This last condition ensures that $\psi^{-1}$ has an inverse $\psi$ defined on $\Gamma_0\equiv\psi^{-1}(W_0)$, which contains the interval $\overline{\Gamma}\equiv\psi^{-1}(\ovW)=P(\ovW)=[0,1]$. By Theorem 8.2 in \cite{Munkres:AnalysisOnManifolds}, $\Gamma_0$ is open and $\psi$ is continuously differentiable on $\Gamma_0$. Moreover, an application of the chain rule to the identity $\psi^{-1}(\psi(\gamma))=\gamma$ gives
\begin{alignat}{1}
\label{Eq:PsiChainRule}
\frac{d\psi}{d\gamma}(\gamma)=\left[\frac{d\psi^{-1}}{d\omega}(\psi(\gamma))\right]^{-1}>0, \quad \gamma\in \Gamma_0.
\end{alignat}
Thus, conditions (i) and (ii) of Definition \ref{Def:FactorableRV} are satisfied. Condition (iii) states that $\psi(\overline{\Gamma})=P^{-1}([0,1])$ contains the support of $\omega$, which holds by definition of $P$. Condition (iv) holds because, by Remark \ref{Rem:FactorablRV} and \eqref{Eq:PsiChainRule},
\begin{alignat}{1}
p_{\gamma}(\gamma)&=p(\psi(\gamma))\left[\frac{d\psi^{-1}}{d\omega}(\psi(\gamma))\right]^{-1} \\
&=p(\psi(\gamma))\left[\frac{dP}{d\omega}(\psi(\gamma))\right]^{-1}, \\
&=p(\psi(\gamma))\left[p(\psi(\gamma))\right]^{-1}, \\
&=1,
\end{alignat}
for all $\gamma\in\overline{\Gamma}$, and $p_{\gamma}(\gamma)=0$ otherwise. Thus, $\gamma$ is uniform. Finally, condition (v) of Definition \ref{Def:FactorableRV} holds by the assumption that $P^{-1}$ is factorable. Thus, $\bom$ is a factorable RV under the transformation $\bom=P^{-1}(\gamma)$.

For example, let $\omega \sim W(\alpha,\beta,\overline{\omega}^L,\overline{\omega}^U)$ be a truncated Weibull RV with coefficients $\alpha$ and $\beta$. The standard (untruncated) Weibull CDF and inverse CDF are given by
\begin{alignat}{1}
P_{\eta}(\eta) &= 1-e^{-(\eta/ \alpha)^{\beta}}, \\
P_{\eta}^{-1}(\gamma) &=\alpha \sqrt[\beta]{-\ln(1-\gamma)}.
\end{alignat}
The CDF of the truncated distribution is given by
\begin{alignat}{1}
P(\omega) = \frac{P_{\eta}(\omega)-P_{\eta}(\overline{\omega}^L)}{P_{\eta}(\overline{\omega}^U)-P_{\eta}(\overline{\omega}^L)}, \quad \forall\omega\in\ovW,
\end{alignat}
and hence
\begin{alignat}{1}
P^{-1}(\gamma) &= P_{\eta}^{-1}\left[P_{\eta}(\overline{\omega}^L)+(P_{\eta}(\overline{\omega}^U)-P_{\eta}(\overline{\omega}^L))\gamma\right], \\
& =\alpha \sqrt[\beta]{-\ln\left[e^{-(\frac{\overline{\omega}^L}{\alpha})^{\beta}}+(e^{-(\frac{\overline{\omega}^U}{\alpha})^{\beta}}-e^{-(\frac{\overline{\omega}^L}{\alpha})^{\beta}})\gamma\right]}. \nonumber
\end{alignat}
This gives the desired factorable transformation $\omega = \psi(\gamma) = P^{-1}(\gamma)$. Similar transformations for several common distributions are collected in  Table \ref{Table:FactorableRvs}.

\begin{table*}
\caption{Some common RVs $\omega$ that are factorable in the sense of Definition \ref{Def:FactorableRV} with $\gamma$ uniformly distributed on $\overline{\Gamma}=[0,1]$ and factorable transformations $\psi(\gamma)=P^{-1}(\gamma)$ given by inverse CDFs. All RVs $\omega$ are truncated on the interval $\ovW=[\overline{\omega}^L,\overline{\omega}^U]$. $P_{\eta}^{-1}(\gamma)$ is the inverse CDF of the corresponding untruncated RV, so that $P^{-1}(\gamma) = P_{\eta}^{-1}\left[P_{\eta}(\overline{\omega}^L)+(P_{\eta}(\overline{\omega}^U)-P_{\eta}(\overline{\omega}^L))\gamma\right]$.}

\scalebox{0.8}{
\begin{tabular}{lccc}
\hline\noalign{\smallskip}\noalign{\smallskip}
Distribution &  & $P_{\eta}^{-1}(\gamma)$ & $P^{-1}(\gamma)$ \\
\noalign{\smallskip}\hline\noalign{\smallskip}

\shortstack[l]{Truncated\\Exponential}
& \shortstack[l]{$Exp(\lambda,\overline{\omega}^L,\overline{\omega}^U)$ \\ $\lambda > 0$}  
& $-\frac{1}{\lambda}\ln (1-\gamma)$
& $-\frac{1}{\lambda}\ln [e^{-\lambda\overline{\omega}^L}+(e^{-\lambda\overline{\omega}^U}-e^{-\lambda\overline{\omega}^L})\gamma]$\\
\noalign{\smallskip}\hline\noalign{\smallskip}

\shortstack[l]{Truncated\\Weibull}
& \shortstack[l]{$W(\alpha,\beta,\overline{\omega}^L,\overline{\omega}^U)$ \\ $\alpha,\beta > 0$}  
& $\alpha \sqrt[\beta]{-\ln(1-\gamma)}$
&$\alpha \sqrt[\beta]{-\ln\left[e^{-(\frac{\overline{\omega}^L}{\alpha})^{\beta}}+(e^{-(\frac{\overline{\omega}^U}{\alpha})^{\beta}}-e^{-(\frac{\overline{\omega}^L}{\alpha})^{\beta}})\gamma\right]}$\\
\noalign{\smallskip}\hline\noalign{\smallskip}

\shortstack[l]{Truncated\\Cauchy}
& \shortstack[l]{$C(\alpha,\beta,\overline{\omega}^L,\overline{\omega}^U)$}  
& $\beta \tan((\gamma-\frac{1}{2})\pi)+\alpha$
& \shortstack[l]{$\beta\tan[\arctan(\frac{\overline{\omega}^L-\alpha}{\beta})+(\arctan(\frac{\overline{\omega}^U-\alpha}{\beta})-\arctan(\frac{\overline{\omega}^L-\alpha}{\beta}))\gamma]+\alpha$} \\
\noalign{\smallskip}\hline\noalign{\smallskip}

\shortstack[l]{Truncated\\Rayleigh}
& \shortstack[l]{$Ra(\sigma,\overline{\omega}^L,\overline{\omega}^U)$ \\ $\sigma>0$}  
& $\sqrt{-2\sigma^2 \ln (1-\gamma)}$
& $\sqrt{-2\sigma^2 \ln [e^{-\frac{(\overline{\omega}^L)^2}{2\sigma^2}}+(e^{-\frac{(\overline{\omega}^U)^2}{2\sigma^2}}-e^{-\frac{(\overline{\omega}^L)^2}{2\sigma^2}})\gamma]}$\\
\noalign{\smallskip}\hline\noalign{\smallskip}

\shortstack[l]{Truncated\\Pareto}
& \shortstack[l]{$Pa(m,\alpha,\overline{\omega}^L,\overline{\omega}^U)$ \\ $m,\alpha>0$}  
& $m(1-\gamma)^{-\frac{1}{\alpha}}$
& $m[(\frac{m}{\overline{\omega}^L})^{\alpha}+((\frac{m}{\overline{\omega}^U})^{\alpha}-(\frac{m}{\overline{\omega}^L})^{\alpha})\gamma]^{-\frac{1}{\alpha}}$\\
\noalign{\smallskip}\hline

\end{tabular}
\label{Table:FactorableRvs}}
\end{table*}

\subsection{Other Transformations}
For many common RVs, explicit transformations from simpler RVs have been developed for the purpose of generating samples computationally. One example is the Box-Muller transformation, which transforms the two-dimensional uniform RV $\bgam$ on $(0,1]\times[0,1]$ into two independent standard normal RVs $\bom$ via
\begin{alignat}{1}
\label{Eq:BoxMuller}
\omega_1 & = \sqrt{-2\ln \gamma_1}\cos (2\pi \gamma_2), \\
\omega_2 & = \sqrt{-2\ln \gamma_1}\sin (2\pi \gamma_2). \nonumber
\end{alignat}
According to \cite{Martino2012}, the Box-Muller transform can also be used to generate independent bivariate normal RVs truncated on a disc of radius $\overline{r}$; i.e., with $\ovOmega \equiv \{\bom\in\mathbb{R}^{2}:\omega_1^2+\omega_2^2\leq\overline{r}^2\}$. This is accomplished by simply setting $\overline{\gamma}_1^L=\exp(-\frac{\overline{r}^2}{2})$, $\overline{\Gamma}=[\overline{\gamma}_1^L,1]\times[0,1]$, and defining $\gamma$ on $\overline{\Gamma}$ by \eqref{Eq:BoxMuller}, which is clearly factorable.

Many other factorable transformations can be readily devised. Naturally, we can replace log-normal RVs by the log of normal RVs, $\chi^2$ RVs by summations of squared standard normal RVs, etc.

\subsection{Dependent RVs}
\label{Sec:Dependent RVs}
Using the techniques outlined in the previous subsections, factorable random vectors can be generated with elements obeying a wide variety of common distributions, but all elements must be independent. Let $\hat{\bom}$ denote such a random vector, so that $\mathrm{Cov}(\hat{\bom})=\bs{I}$. In this subsection, we show that another factorable (in fact linear) transformation can be used to generate a random vector $\bom$ from $\hat{\bom}$ with any desired mean $\bs{d}$ and positive definite (PD) covariance matrix $\bs{C}$. For any such $\bs{C}$, positive definiteness implies that there exists a unique PD matrix $\bs{C}^{1/2}$ such that $(\bs{C}^{1/2})^2=\bs{C}$. Thus, defining $\bom = \bs{d}+\bs{C}^{1/2}(\hat{\bom}-\mathbb{E}[\hat{\bom}])$, we readily obtain $\mathbb{E}[\bom]=\bs{d}$ and 
\begin{alignat}{1}
\mathrm{Cov}(\bom) &= \mathbb{E}[(\bom-\mathbb{E}[\bom])(\bom-\mathbb{E}[\bom])^{\mathrm{T}}], \\
&=  \mathbb{E}[\bs{C}^{1/2}(\hat{\bom}-\mathbb{E}[\hat{\bom}])(\hat{\bom}-\mathbb{E}[\hat{\bom}])^{\mathrm{T}}(\bs{C}^{1/2})^{\mathrm{T}}], \\
&=  \bs{C}^{1/2}\mathrm{Cov}(\hat{\bom})\bs{C}^{1/2}, \\
&= \bs{C}, 
\end{alignat}
as desired. Note that this linear transformation can be applied directly to any of the primitive RVs listed in Table \ref{Table:PrimitiveDists} (e.g., to obtain a multivariate normal distribution with mean $\bs{d}$ and covariance $\bs{C}$) or to more general factorable RVs constructed via the methods in the previous subsections.

\begin{exm}
\label{Ex:Reactors}
To demonstrate the use of non-primitive factorable RVs, we consider relaxing the objective function of the following stochastic optimization problem modified from \cite{Ryoo:1995}, which considers the optimal design of two consecutive continuous stirred-tank reactors of volumes $x_1$ and $x_2$ carrying out the reactions $A\rightleftharpoons B\rightleftharpoons C$:
\begin{align*}
\min_{\x} \ \ &-\mathbb{E}\left[\frac{k_{f,2} x_2(1+k_{r,1} x_1)+k_{f,1} x_1(1+k_{f,2} x_2)}{(1+k_{f,1} x_1)(1+k_{f,2} x_2)(1+k_{r,1} x_1)(1+k_{r,2} x_2)}\right]\\
\text{s.t.} \ \  &x_1^{0.5} + x_2^{0.5} \leq 4\\
&0 \leq x_1,x_2\leq 16\\
&k_{r,1} = 0.99k_{f,1}\\
&k_{r,2} = 0.90k_{f,2} \\
&\begin{bmatrix}
    k_{f,1}\\
    k_{f,2}
\end{bmatrix}
=
\begin{bmatrix}
	0.097 \\
	0.039
\end{bmatrix}
+
\begin{bmatrix}
    0.0072 \ 0.0004 \\
    0.0008 \ 0.0036
\end{bmatrix}
\begin{bmatrix}
    \gamma_1\\
    \gamma_2
\end{bmatrix} \\
&\gamma_1 \sim \mathcal{N}(\mu_1,\sigma_1^2,\overline{\gamma}_1^{L},\overline{\gamma}_1^{U}) \\
&\gamma_2 \sim \mathcal{N}(\mu_2,\sigma_2^2,\overline{\gamma}_2^{L},\overline{\gamma}_2^{U})
\end{align*}
The objective is to maximize the expected value of the concentration of product B in the exit stream of the second reactor. We assume that the forward reaction rates $(k_{f,1},k_{f,2})$ follow a bivariate normal distribution with
\begin{alignat}{1}
\bs{d}=\mathbb{E}\left(\begin{bmatrix} k_{f,1} \\ k_{f,2} \end{bmatrix}\right) &=\begin{bmatrix} 0.097 \\ 0.039 \end{bmatrix}, \\
\bs{C}^{1/2}=\sqrt{\mathrm{Cov}\left(\begin{bmatrix} k_{f,1} \\ k_{f,2} \end{bmatrix}\right)} &=\begin{bmatrix} 0.0072 \ 0.0004 \\ 0.0008 \ 0.0036 \end{bmatrix}.
\end{alignat}
To avoid computing probabilities and conditional expected values of this distribution over subintervals, we introduce a primitive random vector $\gamma$ consisting of two independent random variables $(\gamma_1,\gamma_2)$ satisfying standard normal distributions ($\mu_1=\mu_2=0$ and $\sigma_1=\sigma_2=1$) truncated at plus and minus five standard deviations, giving the truncation bounds
\begin{alignat}{1}
[\overline{\gamma}_1^{L},\overline{\gamma}_1^{U}] = [\overline{\gamma}_2^{L},\overline{\gamma}_2^{U}] = [-5,5].
\end{alignat}
After truncation, $\mathrm{Var}(\gamma_1)=\mathrm{Var}(\gamma_2)=0.9999\approx 1$. We then transform $(\gamma_1,\gamma_2)$ into a bivariate normal RV $(k_{f,1},k_{f,2})$ with the desired mean and covariance as discussed in \S\ref{Sec:Dependent RVs}. With these definitions, Figure \ref{Fig MCJ Relaxations for NonUniform RVs} shows convex relaxations for the objective function on $X\equiv[2.5,4]\times[2.5,4]$ computed as in \eqref{Eq:GammaRel cv} with partitions $\mathcal{P}=\{\Gamma_i\}_{i=1}^{n_{\omega}}$ of $\overline{\Gamma}$ consisting of 1, 16, and 64 elements. The required relaxations of $\hat{f}(\x,\bgam)=f(\x,\bs{d}+\bs{C}^{1/2}\bgam)$ on each $X\times\Gamma_i$ were computed using McCormick relaxations \cite{McCormick:1976}. Figure \ref{Fig MCJ Relaxations for NonUniform RVs} shows that the relaxation computed with a partition consisting of only one element is fairly weak, but improves significantly with 16 elements. On the other hand, the additional improvement achieved with 64 elements is minor.

\begin{figure}
\centering
\includegraphics[width=0.5\textwidth,height=0.4\textwidth]{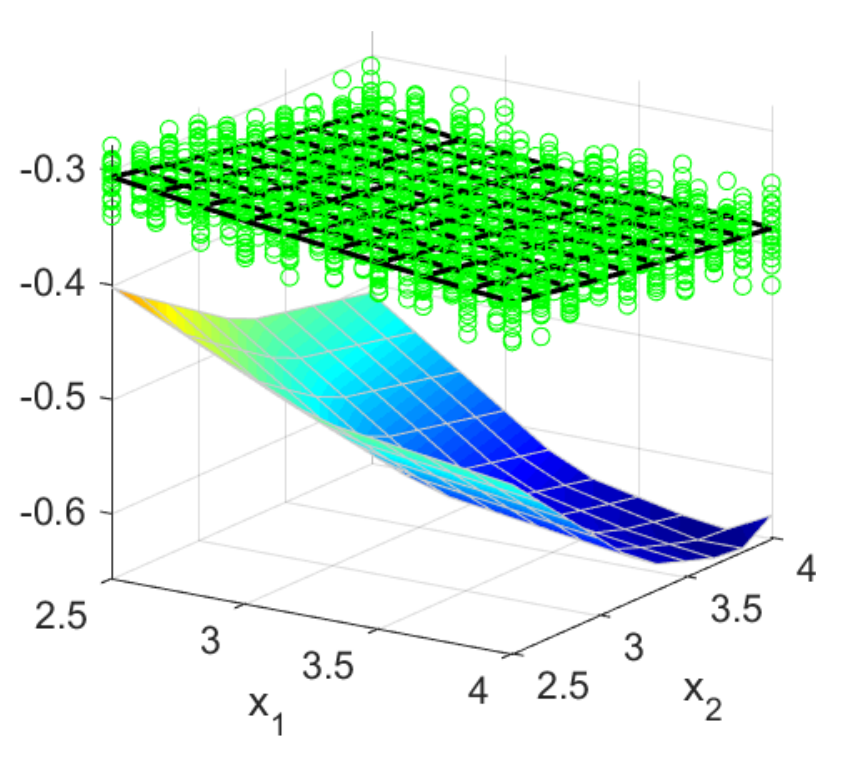}
\includegraphics[width=0.5\textwidth,height=0.4\textwidth]{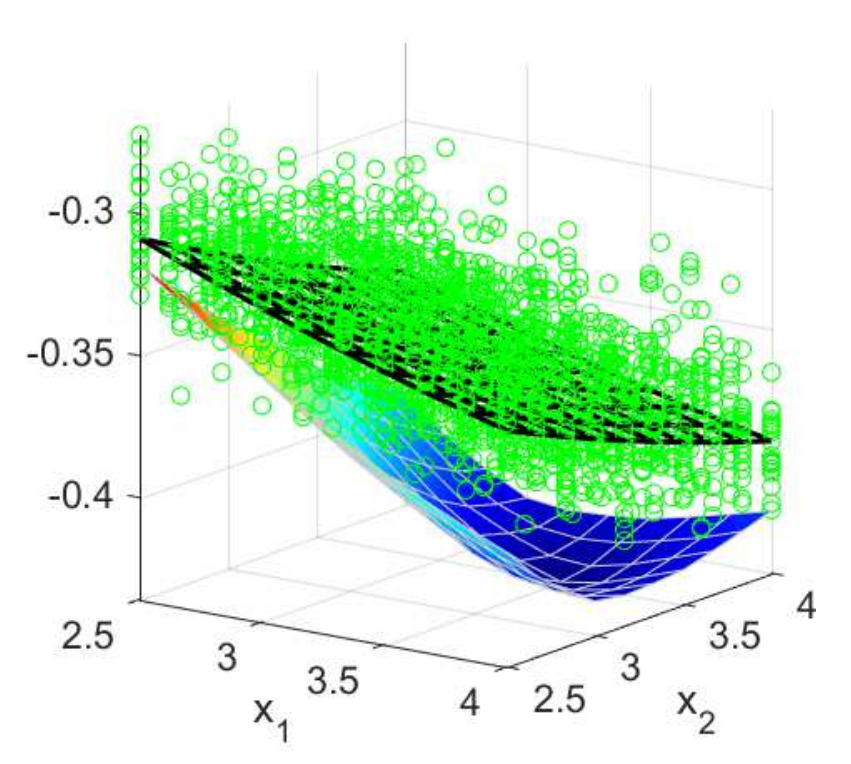}
\includegraphics[width=0.5\textwidth,height=0.4\textwidth]{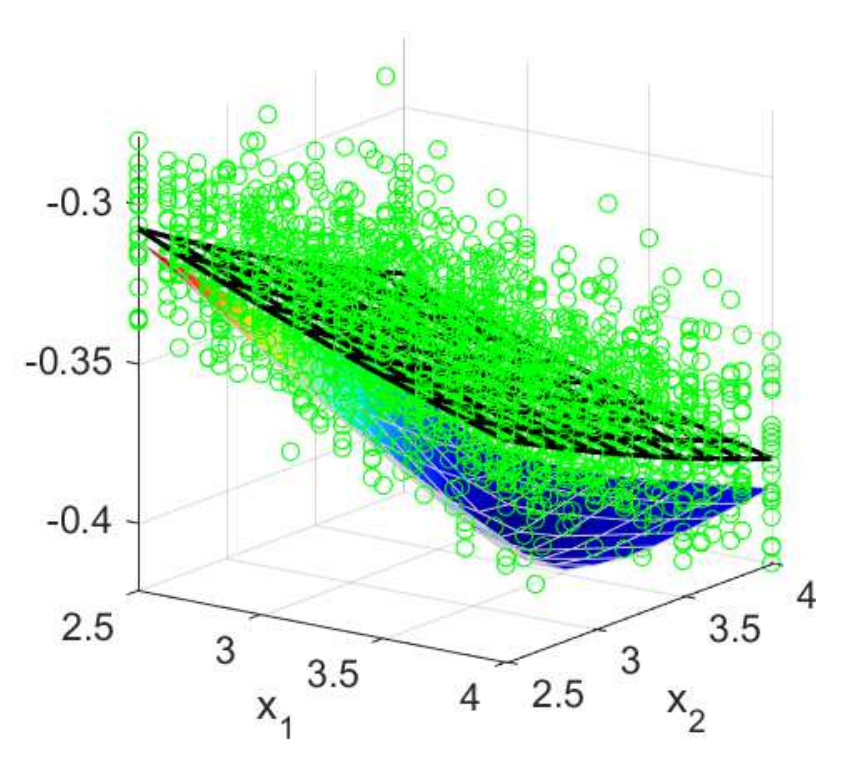}

\caption{Jensen-McCormick convex relaxations of the objective function $F$ in Example \ref{Ex:Reactors} (shaded surfaces) with partitions of $\overline{\Gamma}$ into 1 (top), 16 (middle), and 64 (bottom) uniform subintervals, along with simulated values of $f(\x,\bom)$ at sampled $\bom$ values ($\circ$) and a sample average approximation of $F$ with 100 samples (black mesh).}
\label{Fig MCJ Relaxations for NonUniform RVs}
\end{figure}

Figure \ref{Fig Convergence X and Omega for Reactor Example} shows the pointwise convergence of the computed relaxations on a sequence of intervals $X_{\epsilon} \equiv [5-\epsilon,5+\epsilon]\times [6-\epsilon,6+\epsilon]$ as $\epsilon\rightarrow0$. Specifically, Figure \ref{Fig Convergence X and Omega for Reactor Example} shows the convergence of the scheme of relaxations $(\mathcal{F}_{X}^{cv},\mathcal{F}_{X}^{cc})$ defined analogously to \eqref{Eq:mathcalF Def cv}--\eqref{Eq:mathcalF Def cc} by imposing an $X$-dependent uniform partitioning rule to $\overline{\Gamma}$ such that
\begin{alignat}{1}
\label{Eq:Gamma Partition Convergence Condition}
\sumi \P{\Gamma_i}w(\Gamma_i)^2 \leq Kw(X)^2.
\end{alignat}
Again, the observed slope of 2 on log-log axes verifies the theoretical second-order pointwise convergence ensured by Theorem \ref{Thm: Final Convergence Result}. \qed

\begin{figure}[h] 
\centering
  \includegraphics[width=0.8\textwidth]{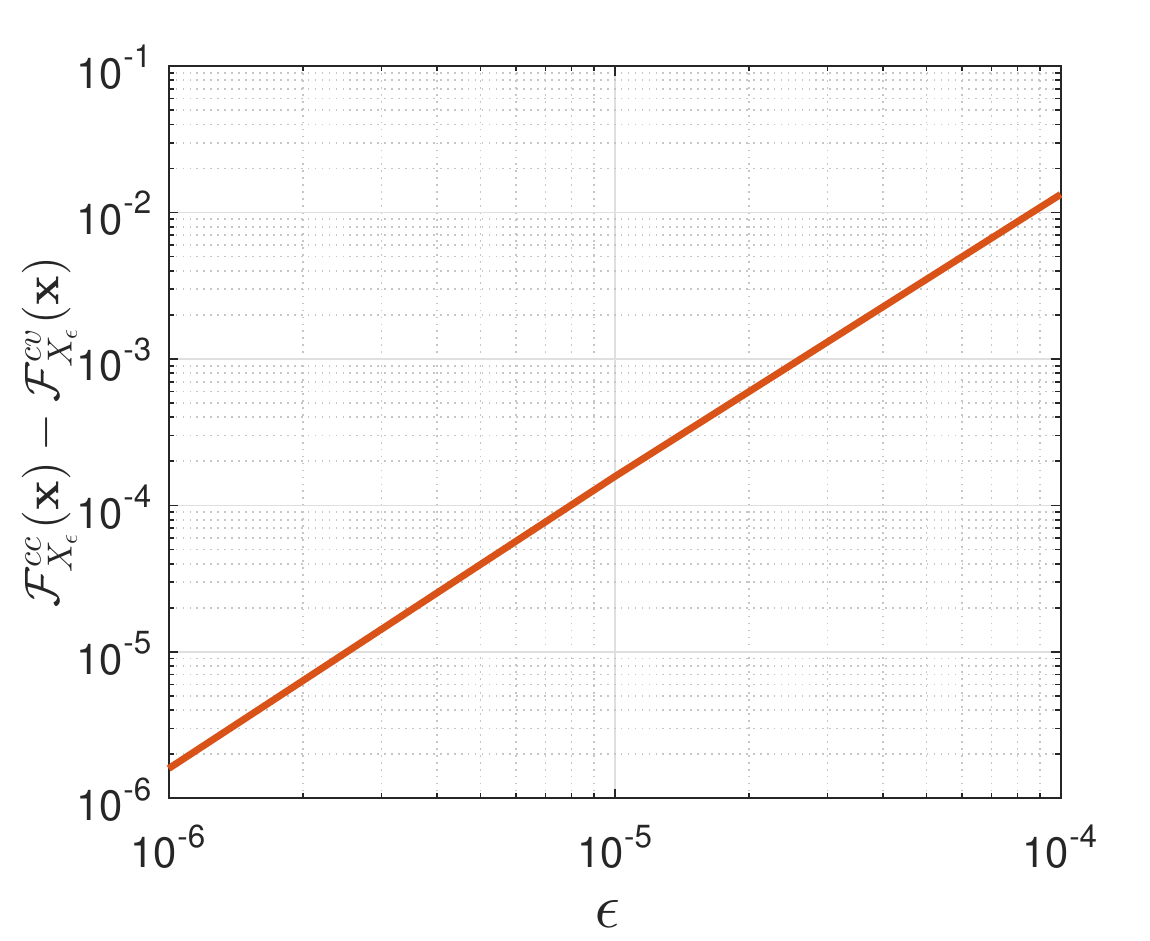}
\caption{Second-order pointwise convergence of Jensen-McCormick relaxations with respect to $\frac{1}{2}w(X_{\epsilon})=\epsilon$ for Example \ref{Ex:Reactors} under a uniform $\overline{\Gamma}$ partitioning rule satisfying \eqref{Eq:Gamma Partition Convergence Condition} with $K=10^8$. Plotted values are for $\x=\mathrm{mid}(X_{\epsilon})=(5,6)$.}
\label{Fig Convergence X and Omega for Reactor Example}       
\end{figure}
\end{exm}

\section{Conclusions}
\label{Conclusions}
In this article, we developed a new method for computing convex and concave relaxations of nonconvex expected-value functions over continuous random variables (RVs). These relaxations can provide rigorous lower and upper bounds for use in spatial branch-and-bound (B\&B) algorithms, thereby enabling the global solution of nonconvex optimization problems subject to continuous uncertainties (e.g., process yields, renewable renewable resources, product demands, etc.). Importantly, these relaxations are not sample-based. Instead, they make use of an exhaustive partition of the uncertainty set. As a consequence, they can be evaluated finitely, even when the original expected-value cannot be. Empirical results with simple uniform partitions showed that tight relaxations can be obtained with fairly coarse partitions. Moreover, when the uncertainty partition is refined appropriately, we established second-order pointwise convergence of the relaxations to the true expected value as the relaxation domain tends to a singleton. Such convergence is critical for ensuring finite termination of spatial B\&B and avoiding the cluster effect. Finally, using the notion \emph{factorable RVs}, we extended our relaxation technique to a wide variety of multivariate probability distributions in a manner that avoids the need to compute any difficult multidimensional integrals. In a forthcoming paper, we plan to develop a complete spatial B\&B algorithm for nonconvex optimization problems with continuous uncertainties by combining the relaxations developed here with efficient, adaptive uncertainty partitioning strategies.

\footnotesize
\bibliographystyle{spmpsci_unsrt_nodoi}
\bibliography{JMC}

\end{document}